\numberwithin{equation}{section}
\newtheorem{theorem}{Theorem}[section]
\newtheorem{remark}[theorem]{Remark}
\newtheorem{lemma}[theorem]{Lemma}
\theoremstyle{definition}
\theoremstyle{remark}
\def\XXint#1#2#3{{\setbox0=\hbox{$#1{#2#3}{\int}$}
        \vcenter{\hbox{$#2#3$}}\kern-.5\wd0}}
\def\XXint#1#2#3{{\setbox0=\hbox{$#1{#2#3}{\int}$ }
        \vcenter{\hbox{$#2#3$ }}\kern-.6\wd0}}
\begin{document}
\title[Existence and nonexistence of minimizer for
Thomas-Fermi-Dirac-von Weizs\"{a}cker model on lattice graph]{
Existence and nonexistence of minimizer for Thomas-Fermi-Dirac-von
Weizs\"{a}cker model on lattice graph}

\author[Y. Liu]{Yong Liu}
\address{Yong Liu, Department of Mathematics,  University of Science and Technology of China, Hefei, China}
\email{yliumath@ustc.edu.cn}

\author[J. Wang]{Jun Wang}
\address{\noindent Jun ~Wang,~Institute of Applied System Analysis, Jiangsu University,
    Zhenjiang, Jiangsu, 212013, P.R. China.}
\email{wangmath2011@126.com }

\author[K. Wang]{Kun Wang}
\address{\noindent Kun ~Wang,~Institute of Applied System Analysis, Jiangsu University,
    Zhenjiang, Jiangsu, 212013, P.R. China.}
\email{wangkun880304@163.com}

\author[W. Yang]{Wen Yang}
\address{\noindent Wen Yang, Department of Mathematics, Faculty of Science and Technology, University of
	Macau, Macau, P.R. China}
\email{wenyang@um.edu.mo}


\thanks{The research of Y. Liu is partially supported by NSFC No. 11971026 and No. 12141105. The research of J. Wang is partially supported by National
Key R$\&$D Program of China (2022YFA1005601), NSFC No. 11971202 and
Outstanding Young foundation of Jiangsu Province No. BK20200042. The research of W. Yang  is partially supported by National Key R\&D Program of China 2022YFA1006800, SRG 2023-00067-FST, NSFC No. 12171456 and NSFC No. 12271369.}

\begin{abstract}
The focus of our paper is to investigate the possibility of a
minimizer for the Thomas-Fermi-Dirac-von Weizs\"{a}cker model on the
lattice graph $\mathbb{Z}^{3}$. The model is described by the
following functional:
\begin{equation*}
E(\varphi)=\sum_{y\in\mathbb{Z}^{3}}\left(|\nabla\varphi(y)|^2+
(\varphi(y))^{\frac{10}{3}}-(\varphi(y))^{\frac{8}{3}}\right)+
\sum_{x,y\in\mathbb{Z}^{3}\atop ~\ y\neq x\hfill}\frac{{\varphi}^2(x){\varphi}^2(y)}{|x-y|},
\end{equation*}
with the additional constraint that $\sum\limits_{y\in\mathbb{Z}^{3}}
{\varphi}^2(y)=m$ is sufficiently small. We also prove the
nonexistence of a minimizer provided the mass $m$ is adequately large.
Furthermore, we extend our analysis to a subset $\Omega \subset
\mathbb{Z}^{3}$ and prove the nonexistence of a minimizer for the
following functional:
\begin{equation*}
E(\Omega)=|\partial\Omega|+\sum_{x,y\in\Omega\atop ~y\neq
x\hfill}\frac{1}{|x-y|},
\end{equation*}
under the constraint that $|\Omega|=V$ is sufficiently large.

\end{abstract}

\maketitle {\bf Keywords}: Variational methods, Lattice graph;
Nonlocal term; Minimizer solution.

\

\

\section{Introduction}

The aim of this paper is to investigate whether a minimizer exists
for two mass-constrained variational problems on a lattice graph.
Specifically, we are interested in the
Thomas-Fermi-Dirac-von Weizs\"{a}cker (TFDW) functional
\begin{equation}\label{a-3}
	E(\varphi)=\int_{\mathbb{R}^{3}}\left(|\nabla\varphi|^2+
	\varphi^{\frac{10}{3}}-\varphi^{\frac{8}{3}}\right) dx
	+\int_{\mathbb{R}^{3}}
	\int_{\mathbb{R}^{3}}\frac{{\varphi}^2(x){\varphi}^2(y)}{|x-y|}dxdy
\end{equation}
and the Gamow's liquid drop functional
\begin{equation}\label{a-4}
E(\Omega)=|\partial\Omega|
+\int_{\Omega}\int_{\Omega}\frac{1}{|x-y|}dxdy,
\end{equation}
where $\Omega\subset\mathbb{R}^{3}$ is a bounded set.

The functional \eqref{a-3} is commonly known in the Physics
literature as the Thomas-Fermi-Dirac-von Weizs\"{a}cker (TFDW) model
for electrons in the absence of an external potential(the electron
density is represented by $\varphi^2=\sigma$). The ionization
conjecture in quantum mechanics is also associated with this model.
To introduce it, we define the minimized variational problem
\begin{equation}\label{aj5}
I_Z^{TFDW}:=\inf\left\{E^{TFDW}(\varphi):\varphi\in
H^1(\mathbb{R}^{3}),\int_{\mathbb{R}^{3}} {\varphi^{2}}dx=N\right\},
\end{equation}
where
\begin{equation}\label{aj6}
\begin{split}
E^{TFDW}(\varphi)&=C^{W}\int_{\mathbb{R}^{3}}|\nabla\varphi|^2dx+C^{TF}\int_{\mathbb{R}^{3}}
\varphi^{\frac{10}{3}}-C^{D}\int_{\mathbb{R}^{3}}\varphi^{\frac{8}{3}}
dx\\
&\quad-Z\int_{\mathbb{R}^{3}}\frac{\varphi^2}{|x|}dx
+\int_{\mathbb{R}^{3}}
\int_{\mathbb{R}^{3}}\frac{{\varphi}^2(x){\varphi}^2(y)}{|x-y|}dxdy.
\end{split}
\end{equation}
The ionization conjecture states that
\medskip

\noindent {{\bf Ionization conjecture}: The number of electrons that can be bound to an atomic nucleus of charge $Z$ cannot exceed $Z+1$.} Mathematically, if $N\leq Z+1$, then $I_Z^{TFDW}$ has a minimizer.
\medskip

\noindent For more details statements about this problem one can refer to
\cite[Problem 9]{Simon-2000} and \cite[Chapter
12]{Lieb-Seiringer-2010}. Although experimental data shows that a
neutral atom has the capacity to bind up to two extra electrons, it
remains challenging to establish a rigorous justification for this
observation based on the fundamental principles of quantum
mechanics. Over the last few years, a significant number of
mathematicians have dedicated their efforts to analyzing the problem
\eqref{aj5}-\eqref{aj6}. In the paper \cite{P. Lions-1987}, the
author proved the existence of solution to the classical TFDW
problem \eqref{aj5} for $N\leq Z$ and Le Bris \cite{Le Bris-1993}
extended it to $N\leq Z+C$ for some $C>0$. Fefferman and Seco et. al in \cite{Fefferman-1990,Seco-Solovej-1990} proved that if
$Z\rightarrow\infty$, then $N_c(Z)\leq Z+O(Z^{5/7})$. In  \cite{Lieb-PhysicA-1984,Nam-2012}, they proved that for all
$Z\geq1$, $N_c(Z)<\min(2Z+1, 1.22Z+3Z^{1/3})$ holds true. In the
paper \cite{Solovej-Annals-2003}, the author prove that there exists
a universal constant $C>0$ such that if $N>Z+C$, then $E^{HF}(N)$
has no minimizer, where $0\leq\gamma\leq1$, $\gamma^2=\gamma$ and
$Tr\gamma=N$
$$E^{HF}(N)=\inf_{Tr\gamma=N}\left( Tr(-\Delta-Z|x|^{-1})\gamma)
+\frac{1}{2}\int_{\mathbb{R}^{3}}
\int_{\mathbb{R}^{3}}\frac{\gamma(x;x)\gamma(y;y)-\gamma(x;y)}{|x-y|}dxdy\right\}$$
and $\gamma$ is a density matrix with eigenfunctions $u_j$ and
corresponding eigenvalues $\nu_j$ on either $L^2(\mathbb{R}^3)$ or
$L^2(\mathbb{R}^3; \mathbb{C}^2)$ we shall write
\begin{equation*}
Tr[-\Delta\gamma]=\Sigma_{j}\nu_j\int_{\mathbb{R}^{3}}|\nabla
u_j|^2dx.
\end{equation*}
In the recent papers \cite{Nam-Bosch-2017,Frank-Nam-2018} proved
that there exists a constant $C>0$ such that for all $Z>0$, if
$I_Z^{TFDW}$ has a minimizer, then $N\leq Z+C$. For more results on
this direction one can refer to the references
\cite{Benguria-1981,Fermi-1927,Lieb-1981,Lieb-1984,Lieb-1988-CMP,Nam-2022-EMS,
Ruiz-2010,Sanchez-2004,Solovej-Invention-2003,L.H.Thomas-1927,Weizscker-1935}
and references therein.

To the best of our knowledge the ionization conjecture was still not
complete solved in the continue case. For the special case $Z=0$,
the ionization conjecture can also be interpreted as the vacuum can
only bind a finite number of electrons in the TFDW model. The
work \cite{J. Lu-2014} established that for a large
mass, the functional $I_0^{TFDW}$ given by
\begin{equation}\label{aj7}
I_0^{TFDW}=\inf_{\varphi\in
H^1(\mathbb{R}^{3}),\int_{\mathbb{R}^{3}}\varphi^2dx=m}E(\varphi)
\end{equation}
does not have a minimizer. The first objective of this paper is to
examine the existence and nonexistence of a minimizer of \eqref{aj7}
on the lattice graph $\mathbb{Z}^{3}$.

On the other hand, for $\Omega\in\mathbb{R}^{3}$, we consider the
Gamow's liquid drop model
\begin{equation}\label{a-8}
E_V=\inf\left\{E(\Omega):|\Omega|=V\right\},
\end{equation}
where $E(\Omega)$ is given in \eqref{a-4}. If the Coulomb term is
not present in \eqref{a-3} and \eqref{a-4}, When the volume
constraint $m$ is large, a Modica-Mortola type result establishes a
connection between the two functionals via Gamma convergence, as
stated in \cite{Modica-1987}. Gamow initially proposed problem
\eqref{a-8} in \cite{Gamow-1930} with the intention of uncovering
fundamental properties of atoms and offering a straightforward
nuclear fission model. Out of all the measurable sets with given
volume, a ball minimizes the perimeter term while maximizing the
Coulomb term. This highlights the significance of examining the
interplay between the perimeter term and Coulomb term. In 
\cite{Choksi-2010}, the authors proved the existence of a minimizer
of \eqref{a-4} as $V$ is small. The paper \cite{Knupfer-2014} proved
the nonexistence of the minimizer as $V$ is large by a deep
techniques in geometric measure theory. Also, Lu and Otto \cite{J.
Lu-2014} proved the nonexistence results by using different
arguments. Furthermore, this type of model is present in the study
of diblock copolymer melt models, Thomas-Fermi type models, and
various other systems, for instance, see
\cite{Alberti-2009,Care-1975,Chen-Khachaturyan-1993,
Choksi-2011,Cicalese-2013,Frank-Lieb-2015, Knupfer-2013,
Muratov-2002,Muratov-2010, Ohta-2009} and the references therein.

In this paper, we investigate the existence and nonexistence of a
minimzer of the energy \eqref{a-3} and \eqref{a-4} on the lattice
graph $\mathbb{Z}^{3}.$ Recently, there has been growing interest among scholars from
diverse fields in graph analysis. Specifically, researchers have
explored the extension of Sobolev inequalities and sharp constants
to finite graphs, as evidenced by studies such as
\cite{Chung-1995,Hua-Li-2021,Nagai-2009,Ostrovskii-2005}. On the
other hand, the study of partial differential equations on graphs is
a fascinating area of research. Notably, the papers
\cite{Grigoryan-Lin-Yang-2016,Grigoryan-Lin-Yang-PDE-2016,Grigoryan-Lin-Yang-2017}
have contributed significantly to this field through a series of
works. Their research employs variational methods to establish the
existence of Yamabe type equations and some nonlinear elliptic
equations on graph. The heat equation has also been a topic of
interest, with several researchers exploring its existence,
uniqueness, and blow-up properties. Notable studies in this area
include \cite{Liu-Chen-Yu-2016, Wojciechowski-2009}. For further
related research, interested readers may refer to
\cite{Grigoryan-2018-Book,
	Lin-Yau-2019-Adv,Lin-Yau-2019-Crell,Huang-Yau-2020,Huang-Wang-Yang-2021, Hua-De-2015,
Keller-2021,Sun-2022-Adv} and the
corresponding references.

To present our findings lucidly, the following is a description of
the basic setting for graphs. Consider a simple, undirected, and
locally finite graph $\mathbb{G} = (\mathbb{V}, \mathbb{E})$, where $\mathbb{V}$ represents the set of
vertices and $\mathbb{E}$ represents the set of edges. Vertices $x$ and $y$
are considered neighbours, written as $x\sim y$, if there is an edge connecting them, meaning that $\{x, y\}\in \mathbb{E}$. A graph is considered
locally finite if each vertex has a finite number of neighbours.
Denote the set of functions on $\mathbb{G}$ by $C(\mathbb{V})$. The Laplacian on
$\mathbb{G}=(\mathbb{V},\mathbb{E})$ is defined as, for any function $u\in C(\mathbb{V})$ and $x\in \mathbb{V}$,
\begin{equation*}
\Delta u(x)=\sum_{y\in \mathbb{V}, ~y\sim x}\left(u(x)-u(y)\right).
\end{equation*}
For any function $u,v \in C(\mathbb{V})$, we can define the associated gradient by
\begin{equation*}
	\Gamma(u, v)(x):=\sum_{y\in\mathbb{V}, ~y\sim x} \frac{1}{2}(u(y)-u(x))(v(y)-v(x)).
\end{equation*}
In particular, let $\Gamma(u)=\Gamma(u, u)$ for
simplicity. We denote the length of its gradient by
\begin{equation*}
	|\nabla u|(x):=\sqrt{\Gamma(u)(x)}=\frac{1}{2}\left(\sum_{y\in\mathbb{V},~ y \sim x}
	(u(y)-u(x))^{2}\right)^{1 / 2}.
\end{equation*}
Let $\mu$ be the counting measure on $\mathbb{V}$, i.e., for any subset
$A\subset \mathbb{V}$, $\mu(A)=\#\{x: x\in A\cap\mathbb{V}\}$. For any function $f$ on
$\mathbb{V}$, we write
\begin{equation*}
	\int_{\mathbb{V}} f d \mu:=\sum_{x \in \mathbb{V}} f(x),
\end{equation*}
whenever it makes sense. The corresponding energy is defined as
\begin{equation*}
\mathcal{E}(u):=\int_{\mathbb{V}}|\nabla u|^2d\mu=\frac12\sum_{x\in\mathbb{V}}\sum_{y\in\mathbb{V},~ y \sim x}(u(y)-u(x))^2.
\end{equation*}
By definition, $H^1(\mathbb{V})$ is a Hilbert space with the inner product given by
\begin{equation*}
\langle u,v\rangle :=\int_{ \mathbb{V}}\left(\Gamma(u,v)+uv\right)d\mu=\frac12\sum_{x\in\mathbb{V}}\sum_{y\in\mathbb{V},~ y \sim x}(u(y)-u(x))(v(y)-v(x))+\sum_{y\in\mathbb{V}}u(y)v(y).
\end{equation*}
The lattice graph $\mathbb{Z}^{3}$ is a very representative locally finite graph, which consists of a group of vertices that are defined as
follows:  the set of vertices
\begin{equation}
	\mathbb{V}=\left\{x=(x_1,x_2,x_3):x_j\in\mathbb{Z},~1\leq j\leq3\right\}
\end{equation}
and the set of edges
\begin{equation}
	\mathbb{E}=\left\{\{x,y\}:x,y\in\mathbb{Z}^3,~\sum_{j=1}^3|x_j-y_j|=1\right\}.
\end{equation}
For a subset $\Omega$ of $\mathbb{Z}^{3}$, we can define its volume and boundary area based on the counting measure as follows: 
\begin{equation*}
|\Omega|:=\#\{x: x \in
\Omega\} \quad \mathrm{and}\quad
|\partial \Omega| :=\left|\left\{ {y\in \Omega  ,~\exists x \notin \Omega\,\,\,\textrm{such that}\ \{x, y\} \in E} \right\}\right|.
\end{equation*}
Let $C(\mathbb{Z}^{3})$ be the set of real-valued functions on
$\mathbb{Z}^{3}$ and $\ell^p(\mathbb{Z}^{3})$, $p\in[1,\infty]$, be
the space of $\ell^p$ summable functions on $\mathbb{Z}^{3}$ w.r.t. the counting measure. We write $\|\cdot\|_p$ as the $\ell^p(\mathbb{Z}^{3})$ norm,
i.e.,
\begin{equation*}
	\|u\|_p=
	\begin{cases}
		\sum\limits_{x\in \mathbb{Z}^{3}}|u(x)|^p,\ &1\leq p<\infty,\\
		\sup\limits_{x\in \mathbb{Z}^{3}}|u(x)|, &p=\infty.
	\end{cases}
\end{equation*}
By definition, we know that
\begin{equation*}
	\|u\|_{H^1(\mathbb{Z}^{3})}^2=\sum_{y\in\mathbb{Z}^{3}}(|\nabla u(y)|^2+|u(y)|^2).
\end{equation*}
From \cite[The inequality (5)]{Hua-Xu-2023}, we have the following
equivalent norm
\begin{equation}\label{H-2}
	\|u\|_{H^1(\mathbb{Z}^{3})}\simeq
	\|u\|_{\ell^2(\mathbb{Z}^{3})}=\sum_{y\in\mathbb{Z}^{3}}|u(y)|^2.
\end{equation}
In the lattice graph setting, the correponding version of energy functional \eqref{a-3} and \eqref{a-4} are the following respectively
\begin{equation}\label{a-1}
	E(\varphi)=\sum_{y\in\mathbb{Z}^{3}}\left(|\nabla\varphi(y)|^2+
	\varphi^{\frac{10}{3}}(y)-\varphi^{\frac{8}{3}}(y)\right)+
	\sum_{x,y\in\mathbb{Z}^{3}\atop ~\ y\neq x\hfill}\frac{{\varphi}^2(x){\varphi}^2(y)}{|x-y|},
\end{equation}
and
\begin{equation}\label{a-2}
	E(\Omega)=|\partial\Omega|+\sum_{x,y\in\Omega\atop ~y\neq
		x\hfill}\frac{1}{|x-y|}.
\end{equation}
\medskip

The first two results concern the functional \eqref{a-1} by establishing the existence of minimizer for small mass and non-existence of minimizer for large mass.

\begin{theorem}\label{theorem1-3}
 If $m$ is small enough,    then the variational problem
\begin{equation}\label{a-5}
I_0^{TFDW}=\inf_{\sum\limits_{y\in\mathbb{Z}^{3}}
	{\varphi}^2(y)=m}E(\varphi)
\end{equation}
has a minimizer.
\end{theorem}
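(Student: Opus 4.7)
The plan is to combine the direct method of the calculus of variations with a concentration--compactness argument adapted to the lattice graph $\mathbb{Z}^3$. The argument splits into three stages: controlling $I_0^{TFDW}$ from both sides, extracting a non-vanishing limit from a minimising sequence, and ruling out loss of mass at infinity.

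For the first step I would establish $-\infty<I_0^{TFDW}<0$. The trivial interpolation $\sum_y\varphi^{8/3}(y)\le\|\varphi\|_{\ell^\infty}^{2/3}\|\varphi\|_{\ell^2}^2\le m\cdot m^{1/3}=m^{4/3}$ (using $\|\varphi\|_{\ell^\infty}\le\|\varphi\|_{\ell^2}=\sqrt m$ on the counting-measure lattice) yields $E(\varphi)\ge-m^{4/3}$ on the constraint set. For the strict negativity, I take a fixed smooth profile $\psi\in C_c^\infty(\mathbb{R}^3)$ with $\|\psi\|_{L^2}=1$ and consider the slowly--varying test function $\varphi_R(y):=\sqrt{m/R^3}\,\psi(y/R)$, $y\in\mathbb{Z}^3$, with $R\gg 1$. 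A Riemann--sum comparison gives the scalings
\begin{equation*}
\sum|\nabla\varphi_R|^2\sim \tfrac{m}{R^2},\quad \sum\varphi_R^{10/3}\sim \tfrac{m^{5/3}}{R^2},\quad \sum\varphi_R^{8/3}\sim \tfrac{m^{4/3}}{R},\quad \sum_{x\ne y}\tfrac{\varphi_R^2(x)\varphi_R^2(y)}{|x-y|}\sim \tfrac{m^2}{R},
\end{equation*}
and choosing $R_*\sim m^{-1/3}$ the Dirac contribution dominates, producing $E(\varphi_{R_*})\le-c\,m^{5/3}$ for $m$ sufficiently small; hence $I_0^{TFDW}\le-c\,m^{5/3}<0$.

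Next let $\{\varphi_n\}$ be a minimising sequence. Since the discrete gradient satisfies $|\nabla|u||\le|\nabla u|$ pointwise and the remaining terms depend only on $|\varphi|$, we may replace $\varphi_n$ by $|\varphi_n|$ and assume $\varphi_n\ge 0$. By~\eqref{H-2} the sequence is bounded in $H^1(\mathbb{Z}^3)$ and hence in every $\ell^p(\mathbb{Z}^3)$ with $p\ge 2$. If $\|\varphi_n\|_{\ell^\infty}\to 0$, then $\sum\varphi_n^{8/3}\le\|\varphi_n\|_{\ell^\infty}^{2/3}m\to 0$ and $\liminf E(\varphi_n)\ge 0$, contradicting $I_0^{TFDW}<0$. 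Therefore there exist $\xi_n\in\mathbb{Z}^3$ and $c_0>0$ with $\varphi_n(\xi_n)\ge c_0$, and translation invariance allows us to take $\xi_n=0$. A diagonal extraction then produces a pointwise limit $\varphi_\star:\mathbb{Z}^3\to[0,\infty)$ with $\varphi_\star(0)\ge c_0$, and Fatou gives $\|\varphi_\star\|_{\ell^2}^2\le m$.

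The heart of the argument is to show $\|\varphi_\star\|_{\ell^2}^2=m$, i.e.\ that no mass escapes to infinity. I would run a Lions dichotomy on the lattice: if the inequality were strict, a cut-off at a slowly growing radius would split $\varphi_n=\eta_n+\zeta_n$ with $\eta_n\to\varphi_\star$ strongly in $\ell^2$, $\|\zeta_n\|_{\ell^2}^2\to m_2:=m-\|\varphi_\star\|_{\ell^2}^2\in(0,m)$, and $\mathrm{dist}(\mathrm{supp}\,\eta_n,\mathrm{supp}\,\zeta_n)\to\infty$. The local terms decouple up to $o(1)$, and the cross Coulomb contribution $\sum_{x\in\mathrm{supp}\,\eta_n}\sum_{y\in\mathrm{supp}\,\zeta_n}\eta_n^2(x)\zeta_n^2(y)/|x-y|$ vanishes, yielding
\begin{equation*}
I_0^{TFDW}=\lim_n E(\varphi_n)\ge I(m-m_2)+I(m_2),\qquad I(\mu):=\inf_{\|\varphi\|_{\ell^2}^2=\mu}E(\varphi).
\end{equation*}
This is contradicted by the strict sub-additivity $I(m_1+m_2)<I(m_1)+I(m_2)$, which I would deduce from the matching asymptotic $I(\mu)\sim-c\mu^{5/3}$ as $\mu\to 0^+$ (upper bound as above; lower bound via a discrete Gagliardo--Nirenberg-type inequality $\sum\varphi^{8/3}\lesssim\|\nabla\varphi\|_{\ell^2}\,\|\varphi\|_{\ell^2}^{5/3}$ combined with Young's inequality) together with the strict convexity of $t\mapsto t^{5/3}$. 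Once no mass is lost, $\varphi_n\to\varphi_\star$ strongly in every $\ell^p$ with $p\ge 2$; Fatou gives lower semi-continuity of the gradient term, dominated convergence handles the polynomial and Coulomb terms, and $E(\varphi_\star)\le I_0^{TFDW}$ makes $\varphi_\star$ a minimiser. The main obstacle is precisely the sub-additivity step: the continuum scaling that implements it for the Euclidean TFDW is unavailable on $\mathbb{Z}^3$, and one must instead prove a discrete Gagliardo--Nirenberg estimate that genuinely exploits the discrete gradient, since the plain lattice embedding $\ell^p\hookrightarrow\ell^q$ ($p\le q$) runs opposite to the Euclidean one.
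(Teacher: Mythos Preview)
Your overall plan---negativity of the infimum, non-vanishing via $\|\varphi_n\|_{\ell^\infty}\not\to0$, splitting, and ruling out mass loss by strict subadditivity---is exactly the paper's scheme, and your non-vanishing step is in fact tidier than the paper's version.

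The genuine gap is your route to strict subadditivity. You want to deduce $I(m_1+m_2)<I(m_1)+I(m_2)$ from a two-sided asymptotic $I(\mu)\sim-c\mu^{5/3}$ together with the strict convexity of $t\mapsto t^{5/3}$. Even granting the discrete Gagliardo--Nirenberg inequality needed for the matching lower bound, this deduction does \emph{not} go through: an estimate $-c_1\mu^{5/3}\le I(\mu)\le -c_2\mu^{5/3}$ with $c_1>c_2$ yields only $I(\theta\mu)\le -c_2\theta^{5/3}\mu^{5/3}$ and $\theta I(\mu)\ge -c_1\theta\mu^{5/3}$, and $I(\theta\mu)<\theta I(\mu)$ would require $c_2\theta^{2/3}>c_1$, which fails for $\theta$ near~$1$. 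Equivalently, when $m_2\to0$ with $m_1+m_2=m$ fixed, the gain $(m_1+m_2)^{5/3}-m_1^{5/3}-m_2^{5/3}$ is only $O(m_2)$ and can be absorbed by the $o(1)$ errors in the asymptotic. So the obstacle you flag (proving discrete GN) is not the decisive one; the inference itself is invalid.

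The paper bypasses this with an \emph{amplitude} rescaling, which \emph{is} available on $\mathbb{Z}^3$: for a near-minimiser $\varphi$ at mass $m$ and $\theta>1$ one writes
\[
E(\sqrt{\theta}\,\varphi)=\theta E(\varphi)+(\theta^{5/3}-\theta)\sum_y\varphi^{10/3}+(\theta-\theta^{4/3})\sum_y\varphi^{8/3}+(\theta^{2}-\theta)D(\varphi^2,\varphi^2)
\]
and argues that for $m$ small the correction is strictly negative, whence $I_{\theta m}<\theta I_m$; strict subadditivity then follows by the standard $\tfrac{1}{\theta}+\tfrac{\theta-1}{\theta}$ trick. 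This uses only the lattice embedding $\ell^2\hookrightarrow\ell^p$ ($p\ge2$) and discrete Hardy--Littlewood--Sobolev, with no GN and no precise asymptotics of $I(\mu)$. For the splitting step itself the paper applies the discrete Br\'ezis--Lieb lemmas (for both the local powers and the Coulomb term) in place of your spatial cut-off, which makes the decoupling a one-liner.
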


\begin{theorem}\label{theorem1-2}
There exists a positive constant $m_0$  such that the variational
problem
\begin{equation}\label{a-6}
I_0^{TFDW}=\inf\limits_{\sum\limits_{y\in\mathbb{Z}^{3}}
	{\varphi}^2(y)=m}E(\varphi)
\end{equation}
does not have a minimizer if $m>m_0$.
\end{theorem}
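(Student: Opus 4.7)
My plan is to adapt the continuous-space strategy of Lu and Otto \cite{J. Lu-2014} to the lattice $\mathbb{Z}^{3}$. The heuristic is that for large mass the long-range Coulomb self-repulsion dominates the short-range cohesive term $-\varphi^{8/3}$, so any configuration of large mass prefers to split into pieces placed far apart; concretely, I will show that every putative minimizer with mass $m \gg 1$ can be strictly improved by translating a substantial chunk of itself to infinity, contradicting minimality.

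I would begin by establishing two preliminary facts. Writing $I(m)$ for the value of $I_0^{TFDW}$ as a function of the mass, the map $m\mapsto I(m)$ is subadditive: given near-optimal $\varphi_i$ with mass $m_i$ ($i=1,2$), the configuration $\varphi_1(y)+\varphi_2(y-Te_1)$ (where $e_1$ is a standard basis vector) is admissible for mass $m_1+m_2$, and sending $T\to\infty$ kills the cross-Coulomb interaction since $\varphi_i^2\in\ell^1(\mathbb{Z}^{3})$. Combined with the bound $I(m_\ast)<\infty$ coming from a single-vertex test function for small $m_\ast$ (or from Theorem \ref{theorem1-3}), subadditivity yields the linear upper bound $I(m)\le Cm$ for all $m\ge 1$ by superimposing many widely separated translates. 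Using the elementary pointwise inequality $t^{5/3}-t^{4/3}\ge -t/4$ for $t\ge 0$ gives
\begin{equation*}
\sum_{\substack{x,y\in\mathbb{Z}^{3} \\ x\ne y}}\frac{\varphi^2(x)\varphi^2(y)}{|x-y|}\;\le\;E(\varphi)+\tfrac{m}{4}\;\le\;Cm
\end{equation*}
for any $\varphi$ with $E(\varphi)\le I(m)$, and in turn the lower bound ``effective diameter of $\varphi$ is $\gtrsim m$'' for any would-be minimizer.

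The core of the proof is a slicing step. Fix a minimizer $\varphi\ge 0$ of mass $m$, translate so that it is centred at a vertex realizing $\|\varphi\|_\infty$, and set $\phi(R):=\sum_{|x|\le R}\varphi^2(x)$. Choose radii $R_1<R_2$ with $\phi(R_1)\ge m/3$ and $\phi(R_2)\ge 2m/3$; by the diameter bound above the width $W:=R_2-R_1$ is large. A pigeonhole argument over $[R_1,R_2]$ produces an integer $r$ such that the shell $\{r\le|x|\le r+1\}$ carries mass at most $Cm/W$, and hence the kinetic cost of severing $\varphi$ across this shell is also at most $Cm/W$ (on the lattice, cutting an edge $\{x,y\}$ replaces $(\varphi(x)-\varphi(y))^2$ by $\varphi^2(x)+\varphi^2(y)$, a difference controlled by the shell mass). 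Write $\varphi=\varphi_{\mathrm{in}}+\varphi_{\mathrm{out}}$ for the resulting inside/outside split, set $\tilde\varphi(y):=\varphi_{\mathrm{in}}(y)+\varphi_{\mathrm{out}}(y-Te_1)$, note that the local nonlinear terms are preserved, and expand
\begin{equation*}
E(\tilde\varphi)-E(\varphi)\;\le\;\frac{Cm}{W}-\Delta_{\mathrm{Coul}}+o_T(1),
\end{equation*}
where $\Delta_{\mathrm{Coul}}$ is the cross-Coulomb interaction between $\varphi_{\mathrm{in}}$ and $\varphi_{\mathrm{out}}$ in the original configuration. Since both pieces carry mass $\gtrsim m$ and the Coulomb bound above lets one confine the bulk of the mass to a ball of radius $\sim D$, one obtains $\Delta_{\mathrm{Coul}}\gtrsim m^2/D$; balancing gives $E(\tilde\varphi)<E(\varphi)$ as soon as $W\gg D/m$, which is achievable once $m$ exceeds an absolute threshold $m_0$, contradicting minimality.

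The main technical obstacle is the last quantitative comparison $\Delta_{\mathrm{Coul}}\gtrsim m^2/D$, because one must rule out the degenerate scenario in which $\varphi_{\mathrm{out}}$ has already escaped into a distant secondary cluster, which would make the cross-Coulomb term arbitrarily small. That scenario is, however, handled by the same circle of ideas: if $\varphi_{\mathrm{out}}$ and $\varphi_{\mathrm{in}}$ are effectively supported at large mutual distance, translating $\varphi_{\mathrm{out}}$ further incurs essentially no gradient cost while still gaining a positive Coulomb improvement, and subadditivity together with the linear bound $I(m)\le Cm$ supplies the contradiction directly. Weaving both branches into a single explicit threshold $m_0$ is the principal balancing act of the proof.
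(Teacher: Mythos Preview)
Your overall plan---adapt Lu--Otto by proving subadditivity of $m\mapsto I(m)$, deriving the linear bound $I(m)\le Cm$ and hence an a priori Coulomb bound for a minimizer, and then exploiting the splitting inequality $E(\varphi)\le E(\varphi_{\mathrm{in}})+E(\varphi_{\mathrm{out}})$---is exactly the framework the paper uses. Your slicing estimate (cutting across a thin shell costs at most the shell mass in kinetic energy, while one gains the cross-Coulomb interaction) is precisely Lemma~\ref{lemma4.3} of the paper.

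The real divergence, and the gap in your proposal, is in how the contradiction is extracted from that splitting inequality. You attempt a \emph{one-shot} comparison: pick a single radius $r\in[R_1,R_2]$ by pigeonhole so that the cutting cost is $\lesssim m/W$, and balance it against $\Delta_{\mathrm{Coul}}\gtrsim m^{2}/D$. This requires $mW\gtrsim D$, but you have no upper bound on $D$: the Coulomb estimate only gives the \emph{lower} bound $D\gtrsim m$, and nothing prevents the minimizer from being spread over a region of diameter far larger than $m$, in which case $\Delta_{\mathrm{Coul}}$ is tiny. Your ``secondary cluster'' branch is exactly this scenario, and the fix you sketch (translate further at no gradient cost) does not close the argument, because when the pieces are already far apart the Coulomb gain from further separation is correspondingly small; you end up comparing two vanishing quantities without a quantitative relation between them. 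Likewise, your assertion that $W=R_2-R_1$ is large does not follow from the diameter lower bound.

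The paper sidesteps this entirely by \emph{iterating} the splitting inequality at dyadic scales rather than using it once. Writing $S(r)=\sum_{B_r}\varphi^{2}$, Lemma~\ref{lemma4.3} gives, after summing over $r\in[R/4,R/2]$, the recursion
\[
S(R/2)-S(R/4)\;\ge\;\frac{S(R/4)}{C_1}\bigl(S(R)-S(R/2)\bigr)
\]
for every $R$ (Lemma~\ref{lemma4.4}). Once any ball carries mass $\ge 2C_1$, this forces the increments $S(2^{k}R)-S(2^{k-1}R)$ to decay geometrically, so the total mass is bounded by $2S(2R_0)$; a finite covering of $B_{2R_0}$ by balls of radius $R_0-1$ (each of mass $<2C_1$ by minimality of $R_0$) then yields the uniform bound $m\le C$. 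This dyadic telescoping is what replaces your missing upper bound on $D$, and it is the one genuinely new ingredient you need.
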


\begin{remark}
Drawing on the perspective from the continuous case presented in \cite{J. Lu-2014}, Theorem \ref{theorem1-2} can be understood as asserting that, from a physical standpoint, the vacuum can only bind a limited number of electrons in the TFDW model on a lattice graph. In turn, this result provides affirmation for the "ionization conjecture" in the specific scenario of $Z=0$ in the lattice graph case.
\end{remark}

Consider \eqref{a-2}, we have the following non-existence result

\begin{theorem}\label{theorem1-1}
For a subset $\Omega\subset\mathbb{Z}^{3}$, there exists a positive
constant $V_0$, such that the variational problem
\begin{equation}\label{a-7}
I_0^{TFDW}(\Omega)=\inf\limits_{|\Omega|=V}E(\Omega)
\end{equation}
does not have a minimizer if $V>V_0$.
\end{theorem}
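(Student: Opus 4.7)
The plan is to show that for every $V \geq 2$ (so in particular for all $V > V_0 := 1$) the variational problem $\inf_{|\Omega|=V} E(\Omega)$ has no minimizer, by exhibiting a strictly better competitor obtained from any candidate minimizer by splitting it into two nonempty pieces and translating one of them to infinity. Suppose $\Omega$ attains $I_0^{TFDW}(\Omega)$. For any nontrivial partition $\Omega = A \sqcup B$ and any $z \in \mathbb{Z}^3$ with $|z|$ so large that $A$ and $B+z$ are disjoint and at mutual distance $\geq 2$, the set $\tilde\Omega_z := A \cup (B+z)$ is a valid competitor of the same cardinality. Writing $D(S) := \sum_{x,y \in S,\, y \neq x} \frac{1}{|x-y|}$ and using translation invariance of $D$ together with additivity of the perimeter over well-separated components,
\begin{equation*}
E(\tilde\Omega_z) = |\partial A| + |\partial B| + D(A) + D(B) + 2 \sum_{x \in A,\, y \in B} \frac{1}{|x - y - z|}.
\end{equation*}
Subtracting $E(\Omega) = |\partial \Omega| + D(A) + D(B) + 2\sum_{x \in A,\, y \in B} \frac{1}{|x-y|}$, passing to the limit $|z| \to \infty$ (in which the last term vanishes by decay of the Coulomb kernel), and using $E(\tilde\Omega_z) \geq E(\Omega)$ yields the necessary geometric condition
\begin{equation*}
|\partial A| + |\partial B| - |\partial\Omega| \;\geq\; 2 \sum_{x \in A,\, y \in B} \frac{1}{|x-y|} \qquad \text{for every nontrivial } \Omega = A \sqcup B.
\end{equation*}

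Specialize to $A = \{x_0\}$, $B = \Omega \setminus \{x_0\}$ for an arbitrary vertex $x_0 \in \Omega$, and introduce
\begin{equation*}
k := \#\{y \in \Omega : y \sim x_0\}, \quad \delta := 1 \text{ if } x_0 \in \partial\Omega \text{ and } 0 \text{ otherwise}, \quad \lambda := \#\{y \in \partial\Omega : y \sim x_0\}.
\end{equation*}
Since all six lattice neighbors of $x_0$ lie outside $A$, one has $|\partial A| = 1$. From the decomposition $\partial B = (\partial\Omega \cap B) \cup \{y \in B : y \sim x_0\}$ and inclusion--exclusion we obtain the identity $|\partial B| = |\partial\Omega| - \delta + k - \lambda$, whence $|\partial A| + |\partial B| - |\partial\Omega| = 1 - \delta + k - \lambda$. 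Splitting the right-hand side of the necessary condition as $2k + 2\sum_{y \in B,\, y \not\sim x_0} \frac{1}{|x_0 - y|}$, it rewrites as
\begin{equation*}
1 - \delta - \lambda - k \;\geq\; 2 \sum_{y \in B,\, y \not\sim x_0} \frac{1}{|x_0 - y|} \;\geq\; 0.
\end{equation*}
If $1 \leq k \leq 5$, then $\delta = 1$ and the left side is $-\lambda - k \leq -1$, a contradiction; if $k = 6$, then $\delta = 0$ and the left side is $-5 - \lambda < 0$, again impossible. Hence $k = 0$, which forces $\delta = 1$, $\lambda = 0$, and then the inequality yields $\sum_{y \in B} 1/|x_0 - y| = 0$, so $B = \emptyset$ and $V = 1$. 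For $V \geq 2$ this contradicts the existence of a minimizer, and any $V_0 \geq 1$ works.

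The main technical step is the perimeter identity $|\partial A| + |\partial B| - |\partial\Omega| = 1 - \delta + k - \lambda$: a vertex may simultaneously lie in several of the boundaries $\partial A$, $\partial B$, $\partial\Omega$ for independent reasons (adjacency to $x_0$ versus adjacency to $\Omega^c$), so the overlap term $\lambda$ must be tracked via inclusion--exclusion on the two contributions $\partial\Omega \cap B$ and $\{y \in B : y \sim x_0\}$ to $\partial B$, rather than by a naive union bound. Once that identity is in hand, the remainder is a routine translation-to-infinity comparison together with elementary arithmetic on $k$, $\delta$, $\lambda$.
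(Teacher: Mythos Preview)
Your argument is correct and takes a genuinely different---and in fact sharper---route than the paper. The paper proceeds by first establishing the upper bound $E(V)\lesssim V$ via subadditivity (Lemmas~\ref{lemma3-1} and~\ref{lemma3-2}), then proving that any minimizer must be connected, and finally deriving from connectedness the lower bound $\sum_{x\neq y}\frac{1}{|x-y|}\gtrsim V\log V$ (Lemma~\ref{le5.3}); the two bounds contradict each other once $V$ is sufficiently large. Your approach bypasses all of this: by splitting off a single vertex $A=\{x_0\}$ and sending the remainder to infinity, you obtain the necessary condition $|\partial A|+|\partial B|-|\partial\Omega|\geq 2\sum_{y\in B}|x_0-y|^{-1}$, and the exact perimeter identity $|\partial A|+|\partial B|-|\partial\Omega|=1-\delta+k-\lambda$ then forces $k=0$ and $B=\emptyset$. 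This yields the optimal threshold $V_0=1$ rather than merely some large constant, and requires neither the subadditivity machinery nor the $V\log V$ estimate. The paper's method mirrors the continuous Lu--Otto argument more closely and produces the $V\log V$ Coulomb lower bound, which may be of independent interest; your method exploits a purely discrete feature---that the vertex-boundary of a singleton is exactly $1$ and that neighbors contribute exactly $1$ to the Coulomb sum---which has no direct analogue in the continuum, where removing a single point changes neither the volume nor the perimeter.
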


Theorems \ref{theorem1-3}-\ref{theorem1-1} can be viewed as counterparts to the results presented in \cite{Choksi-2010} and \cite{J. Lu-2014} in the discrete setting. Although the results obtained in this work align with those of the continuous case and are conceptually natural when extending from Euclidean space to lattice graph. Nevertheless, the proof methodologies differ due to the distinct characteristics of graph structures in comparison to continuous spaces. Specifically, in the proof of Lemma \ref{lemma4.1}, while a similar conclusion as \eqref{4-2} can be easily established using the sophisticated scaling function $\psi_\eta$ in the continuous setting, as demonstrated in \cite{J. Lu-2014}. It seems that equivalent techniques are not available in the lattice graph setting. Instead, we introduce the test function \eqref{d-2} as a replacement for the scaling function $\psi_\eta$  in order to establish a similar conclusion in the discrete case. This highlights a notable distinction between the discrete and continuous cases.
\medskip

The remainder of this paper is structured as follows. In section 2,
we introduce some fundamental notations and present the formula for
the measure of the ball on the lattice graph $\mathbb{Z}^{3}$. In
section 3, we establish the proof for Theorem \ref{theorem1-3}.
Section 4 is dedicated to proving Theorem \ref{theorem1-2}. Finally,
in section 5, we provide the proof for Theorem \ref{theorem1-1}.
\medskip

\begin{center}
	\textbf{Notation}
\end{center}

Throughout the paper the letter $C$ will stand for positive constants which are allowed to vary among different formulas and even within the same lines. 
\medskip

\section{Preliminary results}
This section is devoted to provide some definitions of the integer lattice graph $\mathbb{Z}^3$ and useful conclusions appeard in previous works.  A subset $\Omega\subset\mathbb{Z}^3$ is considered connected if there exists a path between any two vertices $x$ and $y$ in $\Omega$, denoted by
$\{x_i\}_{i=1}^{l}\subset\Omega$, such that $x_1=x$, $x_l=y$ and $x_j\sim x_{j+1}$ for any $1\leq j\leq l-1$. The distance
$d(x,y)$ between two vertices $x$ and $y$ in $\mathbb{Z}^{3}$ is
defined as the infimum of $l$ such that $x=x_1\sim x_2
\sim\cdots\sim x_l=y$, i.e.,
\begin{equation*}
d(x,y)=\inf\{l:x=x_1\sim x_2 \sim\cdots\sim x_l=y\}.
\end{equation*}
The diameter of the $\Omega$ is defined as
\begin{equation*}
\textrm{diam}(\Omega):=\sup_{x,y \in \mathbb{Z}^{3}} d(x,y).
\end{equation*}
For any positive integer $R$, the ball on the integer lattice graph is defined by
\begin{equation*}
B_R(0)=\left\{x \in \mathbb{Z}^{3},~ d(x,0)\leq R  \right\},
\end{equation*}
and its boundary is
\begin{equation*}
\partial B_R(0)=\left\{x \in \mathbb{Z}^{3}, ~d(x,0)=R  \right\}.
\end{equation*}
In the following lemma we compute the volume and surface area of $B_R(0)$ in the discrete setting

\begin{lemma}\label{lemma2-2}
For any positive integer $R$, then we have $|\partial B_R|=4R^2+2$
and $|B_R|=\frac{4R^3+6R^2+8R+3}{3}$.
\end{lemma}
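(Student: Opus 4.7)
The plan is to exploit the explicit description of the graph distance on $\mathbb{Z}^3$: for any $x=(x_1,x_2,x_3)\in\mathbb{Z}^3$, one has $d(x,0)=|x_1|+|x_2|+|x_3|$, since each edge changes exactly one coordinate by $\pm 1$ and any shortest path must resolve each coordinate independently. Consequently $\partial B_R(0)=\{x\in\mathbb{Z}^3:|x_1|+|x_2|+|x_3|=R\}$ and $B_R(0)=\{x\in\mathbb{Z}^3:|x_1|+|x_2|+|x_3|\le R\}$, so both quantities reduce to elementary counting problems.

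To compute $|\partial B_R|$ for $R\ge 1$, I would stratify the sphere by the number of nonzero coordinates. If all three coordinates are nonzero, the $8$ sign choices combined with the $\binom{R-1}{2}$ strictly positive integer solutions of $a+b+c=R$ contribute $8\binom{R-1}{2}=4(R-1)(R-2)$. If exactly one coordinate vanishes, there are $3$ choices for which coordinate is zero, $4$ sign choices for the remaining two, and $R-1$ ordered positive integer solutions of $a+b=R$, giving $12(R-1)$. If exactly two coordinates vanish, one obtains $3\cdot 2=6$ points. Summing,
\begin{equation*}
|\partial B_R|=4(R-1)(R-2)+12(R-1)+6=4R^2+2,
\end{equation*}
which also matches the case $R=1$ directly (six neighbours of the origin). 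For $R=0$ one could note the formula needs to be interpreted separately since $|\partial B_0|=|\{0\}|=1$, but the statement is phrased for positive $R$.

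For the volume I would decompose $B_R$ into its spherical layers: since the origin together with the $R$ nonempty spheres tile the ball,
\begin{equation*}
|B_R|=1+\sum_{k=1}^{R}|\partial B_k|=1+\sum_{k=1}^{R}(4k^2+2).
\end{equation*}
Applying the standard identities $\sum_{k=1}^{R}k^2=\tfrac{R(R+1)(2R+1)}{6}$ and $\sum_{k=1}^{R}2=2R$ yields
\begin{equation*}
|B_R|=1+\frac{2R(R+1)(2R+1)}{3}+2R=\frac{4R^3+6R^2+8R+3}{3},
\end{equation*}
as desired. There is no conceptual obstacle here: the only point that requires a little care is the case analysis for the sphere count (making sure each lattice point with $\ell^1$-norm equal to $R$ is counted exactly once across the three disjoint strata), after which the volume formula follows from a direct telescoping sum.
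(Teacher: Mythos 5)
Your proposal is correct and follows essentially the same route as the paper: identify $\partial B_R$ with the $\ell^1$-sphere $\{|x_1|+|x_2|+|x_3|=R\}$, count by stratifying according to the number of zero coordinates to get $6+12(R-1)+4(R-1)(R-2)=4R^2+2$, and then sum the spherical layers for the volume. The only (welcome) addition is your explicit sanity check at $R=1$.
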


\begin{proof}
According to the definition of the distance for the lattice graph, we see that for any $x=(x_1,x_2, x_3)\in \partial B_R(0)$, it holds that	
$$|x_1|+|x_2|+|x_3|=R.$$ 
Since $\partial B_R(0)$ is symmetric, we divide it into three parts as depicted in Figure \ref{fig:2}.
\begin{figure}[htbp]
  \centering
  \includegraphics[width=0.6\textwidth]{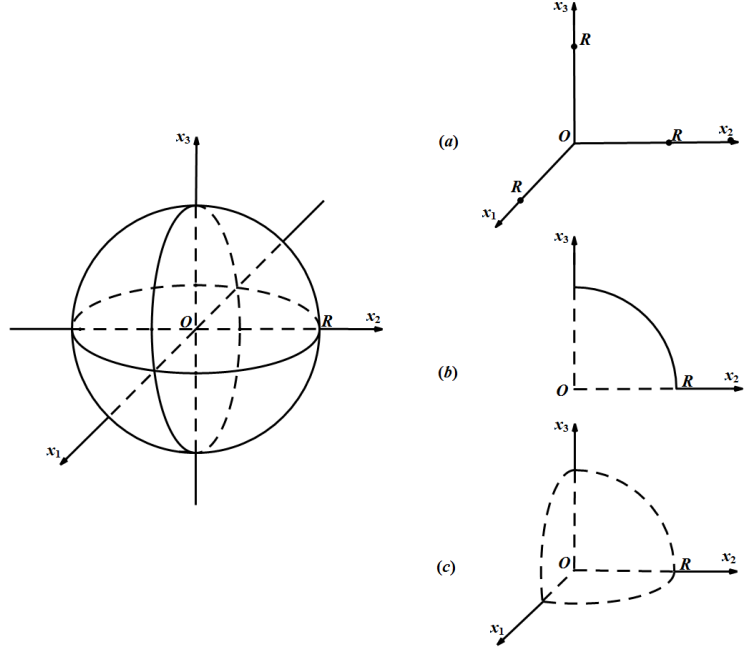}
  \caption{The sketch of $\partial B_R$.}\label{fig:2}
\end{figure}

\noindent ($i$) If $x$ lies at the intersection of the axis and the boundary of the circle with radius $R$, as presented in Figure \ref{fig:2} $(a)$, then there is exactly one value of $x_i$ that is not equal to zero. There are six possible options:
\begin{equation*}
(\pm R,0,0),(0,\pm R,0),(0,0,\pm R).
\end{equation*}

\noindent ($ii$) If $x$ lies at the intersection of the plane and the boundary of the sphere with radius $R$, as illustrated in Figure \ref{fig:2} $(b)$, then $x_i$ can only be zero for one particular index $i$. Without loss of generality, we assume that $x_1=0$, which implies $|x_2|+|x_3|=R$ and $x_2,x_3\neq0$. It can be easily demonstrated that there are $4C_{R-1}^1=4(R-1)$ possible choices in this case. While if $x_2=0$ or $x_3=0$, and the remaining two components are nonzero, we can obtain an additional $8(R-1)$ points.
\medskip

\noindent ($iii$) If $x$ is located at the intersection of the quadrant and the boundary of the sphere with radius $R$, as depicted in Figure \ref{fig:2} $(c)$, none of the components $x_i$ can be zero. In other words, we have $|x_1|+|x_2|+|x_3|=R$, where $x_1,x_2,x_3\neq0$. In this case, there are $8C_{R-1}^2=4(R^2-3R+2)$ points that satisfy this condition.
\medskip

Summarize the discussion from ($i$) to ($iii$), we have
\begin{equation*}
\begin{split}
|\partial B_R|
=\left|\{x \in \mathbb{Z}^{3}\mid |x_1|+|x_2|+|x_3|=R\}\right|
=6+12(R-1)+4(R^2-3R+2)=4R^2+2.
\end{split}
\end{equation*}
Therefore, one gets
\begin{equation*}
\begin{split}
|B_R|&=1+\sum\limits_{\rm{i}=1}^R|\partial B_i|=1+\sum\limits_{\rm{i}=1}^R {(4i^2+2)}=1+2R+4\sum\limits_{\rm{i}=1}^Ri^2=\frac{4R^3+6R^2+8R+3}{3}.
\end{split}
\end{equation*}
This completes the proof.
\end{proof}

Next let us recall the following two discrete Br\'ezis-Lieb type
Lemmas.

\begin{lemma}\label{lemmaB-L}
	$($\cite[Lemma 10]{Hua-Li-2021}$)$ Let $\Omega \subset \mathbb{Z}^{N}$
	be a domain and  $\{u_n\} \subset \ell^{q}(\Omega)$ with
	$1\leq q<\infty$. If $\{u_n\}$ is bounded in $\ell^{q}(\Omega)$ and
	$u_{n} \rightarrow u$ pointwise on $\Omega$ as $n \to \infty $, then
	\begin{equation}\label{B-L}
		\lim _{n \rightarrow
			\infty}\left(\|u_n\|_{\ell^{q}(\Omega)}^{q}-\|u_n-u\|_{\ell^{q}(\Omega)}^{q}\right)=\|u\|_{\ell^{q}(\Omega)}^{q}.
	\end{equation}
\end{lemma}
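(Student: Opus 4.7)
The plan is to follow the classical Brézis--Lieb argument, with integrals over $\mathbb{R}^N$ replaced by sums over $\Omega$ against the counting measure. The scheme transfers cleanly to the discrete setting because pointwise convergence on a countable set is automatic, and dominated convergence for sums is just the termwise interchange of limit and summation under a summable majorant. I do not anticipate a serious obstacle; the key point is to reduce to a pointwise statement controlled by a single summable envelope, and then let an auxiliary parameter tend to zero.

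First I would record the elementary scalar inequality: for every $\epsilon > 0$ and every $q \in [1,\infty)$ there exists $C_\epsilon > 0$ such that
\begin{equation*}
\bigl| |a+b|^q - |a|^q \bigr| \leq \epsilon |a|^q + C_\epsilon |b|^q \qquad \text{for all } a,b \in \mathbb{R}.
\end{equation*}
This is obtained by splitting into the regions $|b| \leq \delta |a|$ and $|b| \geq \delta |a|$ and applying the mean value theorem to $t \mapsto |t|^q$ on the first region. Substituting $a = u_n(x) - u(x)$ and $b = u(x)$ yields the key pointwise inequality
\begin{equation*}
\bigl| |u_n(x)|^q - |u_n(x) - u(x)|^q - |u(x)|^q \bigr| \leq \epsilon |u_n(x) - u(x)|^q + C'_\epsilon |u(x)|^q
\end{equation*}
for every $x \in \Omega$, where $C'_\epsilon$ depends only on $\epsilon$ and $q$.

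Before summing this estimate I would verify that $u \in \ell^q(\Omega)$: Fatou's lemma for the counting measure, together with the pointwise convergence $u_n \to u$ and the uniform bound on $\|u_n\|_{\ell^q(\Omega)}$, gives $\|u\|_{\ell^q(\Omega)} \leq \liminf_n \|u_n\|_{\ell^q(\Omega)} < \infty$. Then set
\begin{equation*}
f_n(x) := \Bigl( \bigl| |u_n(x)|^q - |u_n(x) - u(x)|^q - |u(x)|^q \bigr| - \epsilon |u_n(x) - u(x)|^q \Bigr)_+ .
\end{equation*}
By construction $0 \leq f_n(x) \leq C'_\epsilon |u(x)|^q$, the envelope on the right is summable over $\Omega$, and $f_n(x) \to 0$ for each $x$ by the pointwise convergence of $u_n$. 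Discrete dominated convergence therefore yields $\sum_{x \in \Omega} f_n(x) \to 0$ as $n \to \infty$.

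Finally, summing the pointwise inequality over $\Omega$ and using the triangle inequality for $\ell^q$ (valid since $q \geq 1$) to bound $\|u_n - u\|_{\ell^q(\Omega)}^q$ uniformly in $n$ by some constant $M$, I obtain
\begin{equation*}
\Bigl| \|u_n\|_{\ell^q(\Omega)}^q - \|u_n - u\|_{\ell^q(\Omega)}^q - \|u\|_{\ell^q(\Omega)}^q \Bigr| \leq \sum_{x \in \Omega} f_n(x) + \epsilon M .
\end{equation*}
Passing to $\limsup_{n \to \infty}$ gives a bound of $\epsilon M$, and since $\epsilon > 0$ was arbitrary the left-hand side tends to zero, which is exactly \eqref{B-L}.
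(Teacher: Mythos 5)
Your argument is correct and is precisely the classical Br\'ezis--Lieb scheme (the scalar inequality $\bigl||a+b|^q-|a|^q\bigr|\le\epsilon|a|^q+C_\epsilon|b|^q$, the truncated remainder $f_n$ dominated by $C'_\epsilon|u|^q$, dominated convergence, then $\epsilon\to0$), which is also how the cited reference \cite[Lemma 10]{Hua-Li-2021} proves it; the paper itself offers no proof beyond that citation. All the auxiliary steps you flag (Fatou to get $u\in\ell^q(\Omega)$, the uniform bound on $\|u_n-u\|_{\ell^q(\Omega)}^q$) are handled correctly, so there is nothing to add.
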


\begin{lemma}\label{lemma-nonlocal}$($\cite[Lemma 3.7]{Wang-2023}$)$
	Let $1 \leq p<\infty$ and the sequence $\{u_n\}$ is  bounded in
	$\ell^{\frac{2Np}{N+ \alpha}}(\mathbb{Z}^{N})$. Suppose that $u_n
	\rightarrow u$ pointwise on ${\mathbb{Z}^{N}}$ as $n \to \infty $,
	then
	\begin{equation}\label{c-11}
		\begin{split}
			\lim\limits_{n\to\infty} \left(
			\sum_{x,y \in {\mathbb{Z}^N} \hfill\atop ~\ y\ne
				x\hfill}\frac{|u_n(x)|^p|u_n(y)|^p}{|x-y|^{N-\alpha}}
			-\sum_{x,y \in {\mathbb{Z}^N} \hfill\atop ~\ y\ne
				x\hfill}\frac{|u_n(x)-u(x)|^p|u_n(y)-u(y)|^p}{|x-y|^{N-\alpha}}\right)=\sum_{x,y \in {\mathbb{Z}^N} \hfill\atop ~\ y\ne
				x\hfill}\frac{|u(x)|^p|u(y)|^p}{|x-y|^{N-\alpha}}.
		\end{split}
	\end{equation}
\end{lemma}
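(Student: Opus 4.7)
The plan is to establish this nonlocal Brezis--Lieb identity by combining the discrete Hardy--Littlewood--Sobolev (HLS) inequality on $\mathbb{Z}^N$ with the elementary ``$\varepsilon$-inequality'' for $|a+b|^p$ together with a strong/weak duality argument. Set $q:=2N/(N+\alpha)$ and $w_n:=u_n-u$, so that $w_n\to 0$ pointwise and $\{w_n\}$ remains bounded in $\ell^{pq}(\mathbb{Z}^N)$. Writing
\begin{equation*}
D(f,g):=\sum_{x\ne y}\frac{f(x)g(y)}{|x-y|^{N-\alpha}}=\langle f,I_\alpha g\rangle,
\end{equation*}
with $I_\alpha$ the discrete Riesz potential, the discrete HLS inequality yields $|D(f,g)|\le C\|f\|_{\ell^q}\|g\|_{\ell^q}$ and $\|I_\alpha g\|_{\ell^{q'}}\le C\|g\|_{\ell^q}$ for $q'=2N/(N-\alpha)$, so all the sums in the statement are finite. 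The key algebraic observation, using symmetry of the kernel, is that with $A_n:=|u_n|^p-|w_n|^p$,
\begin{equation*}
D(|u_n|^p,|u_n|^p)-D(|w_n|^p,|w_n|^p)=D(A_n,A_n)+2\,D(A_n,|w_n|^p).
\end{equation*}
Thus the target reduces to proving (i) $D(A_n,A_n)\to D(|u|^p,|u|^p)$ and (ii) $D(A_n,|w_n|^p)\to 0$; both hinge on one central claim: $A_n\to|u|^p$ strongly in $\ell^q(\mathbb{Z}^N)$.

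This strong-convergence claim is the main obstacle. The key input is the elementary inequality (valid for $p\ge 1$): for every $\varepsilon>0$ there exists $C_\varepsilon>0$ with
\begin{equation*}
\bigl||a+b|^p-|a|^p-|b|^p\bigr|\le\varepsilon|a|^p+C_\varepsilon|b|^p\qquad\forall\,a,b\in\mathbb{R}.
\end{equation*}
Applying this with $a=w_n(x)$, $b=u(x)$ and raising to the $q$-th power yields the pointwise bound
\begin{equation*}
\bigl|A_n(x)-|u(x)|^p\bigr|^q\le 2^{q-1}\bigl(\varepsilon^q|w_n(x)|^{pq}+C_\varepsilon^q|u(x)|^{pq}\bigr).
\end{equation*}
Since $u\in\ell^{pq}$ (by Fatou applied to the pointwise limit), given $\eta>0$ we fix $R$ so that $\sum_{|x|>R}|u(x)|^{pq}<\eta$; on the finite set $\{|x|\le R\}$ the pointwise convergence upgrades to uniform convergence, so $\sum_{|x|\le R}|A_n-|u|^p|^q\to 0$. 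Combining with the uniform bound $M:=\sup_n\sum_x|w_n(x)|^{pq}<\infty$, we obtain $\limsup_n\|A_n-|u|^p\|_{\ell^q}^q\le 2^{q-1}(\varepsilon^q M+C_\varepsilon^q\eta)$; sending first $\eta\to 0$ (i.e., $R\to\infty$) and then $\varepsilon\to 0$ completes this step.

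With the strong convergence $A_n\to|u|^p$ in $\ell^q$ in hand, (i) follows at once from the continuity of the bilinear form $D$ on $\ell^q\times\ell^q$. For (ii), $|w_n|^p$ is bounded in $\ell^q$ and tends to $0$ pointwise, which in this reflexive Banach-space setting forces $|w_n|^p\rightharpoonup 0$ weakly in $\ell^q$; boundedness of $I_\alpha:\ell^q\to\ell^{q'}$ then gives $I_\alpha|w_n|^p\rightharpoonup 0$ weakly in $\ell^{q'}$, and pairing this weakly convergent sequence against the strongly convergent $A_n$ yields $D(A_n,|w_n|^p)=\langle A_n,I_\alpha|w_n|^p\rangle\to 0$, completing the proof.
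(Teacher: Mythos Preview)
The paper does not supply its own proof of this lemma; it is merely quoted as \cite[Lemma~3.7]{Wang-2023}, so there is no in-paper argument to compare against. Your proof is correct and follows what is by now the standard route for nonlocal Br\'ezis--Lieb identities: the algebraic decomposition
\[
D(|u_n|^p,|u_n|^p)-D(|w_n|^p,|w_n|^p)=D(A_n,A_n)+2D(A_n,|w_n|^p),\qquad A_n:=|u_n|^p-|w_n|^p,
\]
reduces matters to the single strong-convergence claim $A_n\to|u|^p$ in $\ell^{q}$ with $q=2N/(N+\alpha)$, which you obtain from the $\varepsilon$-inequality $\bigl||a+b|^p-|a|^p-|b|^p\bigr|\le\varepsilon|a|^p+C_\varepsilon|b|^p$ together with a finite-set/tail split. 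The HLS inputs you invoke are precisely Lemma~\ref{lemma2-1} with $r=s=q$ (so the exponent condition holds with equality), and the mapping property $I_\alpha:\ell^{q}\to\ell^{q'}$ follows from that bilinear bound by duality. The weak-convergence step for $|w_n|^p$ is justified because $q=2N/(N+\alpha)\in(1,2)$ whenever $0<\alpha<N$, so $\ell^{q}$ is reflexive and a bounded, pointwise-null sequence is weakly null; the strong--weak pairing then legitimately annihilates the cross term.
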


\noindent The subsequent two important inequalities play a crucial role in our argument 
\begin{lemma}$($\cite[Lemma 7]{Hua-Li-2021},~\cite[Lemma 2.11]{Huang-Li-Yin-2014}$)$
	If $u\in \ell^p(\mathbb{Z}^{N})$, then for all $q\geq p$,
	\begin{equation}\label{p-q}
		\|u\|_{\ell^q(\mathbb{Z}^{N})}\leq \|u\|_{\ell^p(\mathbb{Z}^{N})}.
	\end{equation}
	
\end{lemma}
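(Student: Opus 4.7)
The plan is to reduce to a normalized case by positive homogeneity and then apply a one-line pointwise comparison that exploits the crucial feature of the counting measure: every atom has mass one, so each individual value $|u(x)|$ is automatically dominated by the full $\ell^p$-norm of $u$.

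Concretely, I would first dispose of the trivial case $u\equiv 0$. Otherwise, replacing $u$ by $u/\|u\|_{\ell^p(\mathbb{Z}^N)}$, one may assume $\|u\|_{\ell^p(\mathbb{Z}^N)}=1$. The identity $\sum_{x\in\mathbb{Z}^N}|u(x)|^p=1$ then forces $|u(x)|\leq 1$ at every single $x$, because each nonnegative summand is bounded by the entire sum. Since $q\geq p$, raising a number in $[0,1]$ to a larger exponent only makes it smaller, so $|u(x)|^q\leq |u(x)|^p$ pointwise. Summing over $\mathbb{Z}^N$ yields $\|u\|_{\ell^q(\mathbb{Z}^N)}^q\leq \|u\|_{\ell^p(\mathbb{Z}^N)}^p=1$ (which simultaneously certifies $u\in\ell^q(\mathbb{Z}^N)$), and taking $q$-th roots produces $\|u\|_{\ell^q(\mathbb{Z}^N)}\leq 1=\|u\|_{\ell^p(\mathbb{Z}^N)}$, which is the claim.

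There is no genuine obstacle here; the argument is entirely elementary, and this is why the paper cites rather than proves it. The conceptual point worth emphasizing is that the direction of the inclusion $\ell^p\hookrightarrow\ell^q$ for $q\geq p$, together with the matching norm inequality, is a feature special to measures whose atoms have mass at least one. It is the reverse of the inclusion one sees on a bounded Euclidean domain with Lebesgue measure, and it is precisely this reversal that allows the various lower-order nonlinear terms in the TFDW functional \eqref{a-1} to be controlled by the $\ell^2$-mass constraint in the small-mass regime of Theorem \ref{theorem1-3}.
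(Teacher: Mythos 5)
Your proof is correct and is precisely the standard argument for this embedding (normalize so that $\|u\|_{\ell^p}=1$, observe $|u(x)|\leq 1$ pointwise since the counting measure gives every atom mass one, and use monotonicity of $t\mapsto s^{t}$ for $s\in[0,1]$); the paper itself gives no proof and simply cites the references, which argue in exactly this way. Nothing further is needed, though for completeness one might note that the case $q=\infty$ follows from the same normalization since $\sup_x|u(x)|\leq 1$.
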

\begin{lemma}\label{lemma2-1}$($\cite[Theorem 1.1]{Huang-Li-Yin-2014}$)$
	$($Discrete Hardy-Littlewood-Sobolve Inequality$)$ Let $0<\alpha <N$,
	$1<r,s<\infty$ and $\frac{1}{r} + \frac{1}{s} + \frac{{N - \alpha
	}}{N} \ge 2$. Assume $f \in \ell^r(\mathbb{Z}^{N})$ and $g \in
	\ell^s(\mathbb{Z}^{N})$. Then there exists a positive  constant $C$
	depending only on $r,s,\alpha$ such that
	\begin{equation}\label{b-1}
		\sum_{ x,y \in\mathbb{Z}^{N} \hfill\atop ~\ y\ne x\hfill}
		{\frac{{f(x)g(y)}}{|x-y|^{N-\alpha}}} \le C{\| f
			\|_{{\ell^r}(\mathbb{Z}^{N})}}{\| g\|_{{\ell^s}(\mathbb{Z}^{N})}}.
	\end{equation}
\end{lemma}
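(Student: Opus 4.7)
The plan is to deduce the discrete inequality from the classical continuous Hardy--Littlewood--Sobolev inequality on $\mathbb{R}^N$ via a piecewise constant ``cube extension,'' a standard route for transferring integral inequalities between $\mathbb{R}^N$ and $\mathbb{Z}^N$. Concretely, I would associate to each $f \in \ell^r(\mathbb{Z}^N)$ the function $\tilde f \colon \mathbb{R}^N \to \mathbb{R}$ defined by $\tilde f \equiv f(n)$ on the half-open unit cube $Q_n := n + [-\tfrac{1}{2},\tfrac{1}{2})^N$ centered at the lattice point $n$, and analogously $\tilde g$. Since the cubes partition $\mathbb{R}^N$ and each has unit volume, one immediately has $\|\tilde f\|_{L^r(\mathbb{R}^N)} = \|f\|_{\ell^r(\mathbb{Z}^N)}$ and $\|\tilde g\|_{L^s(\mathbb{R}^N)} = \|g\|_{\ell^s(\mathbb{Z}^N)}$.

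The first substantive step is a pointwise comparison of the discrete and continuous kernels. For distinct lattice points $x \ne y$ and any $u \in Q_x$, $v \in Q_y$, the triangle inequality gives $|u - v| \leq |x - y| + \sqrt{N}$; combined with the crucial discrete input $|x - y| \geq 1$, this upgrades to $|u - v| \leq (1 + \sqrt{N})\,|x - y|$, hence $|x - y|^{-(N - \alpha)} \leq C_N \, |u - v|^{-(N - \alpha)}$ with $C_N = (1 + \sqrt{N})^{N - \alpha}$. Multiplying by $f(x)g(y) = \tilde f(u)\tilde g(v)$ on $Q_x \times Q_y$ and integrating over the pair of unit cubes produces
\begin{equation*}
\frac{f(x)\,g(y)}{|x - y|^{N - \alpha}} \;\leq\; C_N \int_{Q_x}\int_{Q_y} \frac{\tilde f(u)\,\tilde g(v)}{|u - v|^{N - \alpha}}\,du\,dv.
\end{equation*}
Summing over all pairs $x \ne y$ and exploiting nonnegativity of the integrand to drop the restriction on the right-hand side, the discrete sum is majorized by the full continuous double integral over $\mathbb{R}^N \times \mathbb{R}^N$, to which the classical HLS inequality applies in the critical case $\frac{1}{r} + \frac{1}{s} + \frac{N - \alpha}{N} = 2$, yielding the bound $C\|f\|_{\ell^r}\|g\|_{\ell^s}$.

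To extend to the full range $\frac{1}{r} + \frac{1}{s} + \frac{N - \alpha}{N} \geq 2$, I would invoke the discrete embedding \eqref{p-q}. The hypotheses $r,s > 1$ together with the sum condition force $r, s \in (1, N/\alpha)$, which in turn allows one to pick an exponent $r_0 \geq r$ inside $(1, N/\alpha)$ so that $(r_0, s)$ lies on the critical line; since $\|f\|_{\ell^{r_0}} \leq \|f\|_{\ell^r}$ on counting measure, applying the critical case just established at exponents $(r_0, s)$ and then replacing $\|f\|_{\ell^{r_0}}$ by the larger $\|f\|_{\ell^r}$ recovers the off-critical statement.

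The principal obstacle is relatively mild: the heavy lifting is done by continuous HLS, and the only genuinely discrete ingredient exploited is the separation $|x - y| \geq 1$, which is what absorbs the additive $\sqrt{N}$ from the triangle inequality into a purely dimensional constant. The mildly subtle point is the treatment of the ``$\geq$'' (as opposed to ``$=$'') in the exponent condition, which has no analogue on $\mathbb{R}^N$ and must be handled via the $\ell^p$ nesting peculiar to counting measure.
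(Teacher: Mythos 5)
Your proof is correct, but note that the paper does not prove this lemma at all: it is imported verbatim as \cite[Theorem 1.1]{Huang-Li-Yin-2014}, so there is no in-paper argument to compare against. Your transference argument is the natural self-contained route: the piecewise-constant extension preserves the $\ell^p$ norms exactly, the kernel comparison $|u-v|\le |x-y|+\sqrt{N}\le (1+\sqrt{N})|x-y|$ correctly exploits the lattice separation $|x-y|\ge 1$, and dropping the diagonal cubes is harmless by positivity (you should say explicitly that one may replace $f,g$ by $|f|,|g|$ at the outset, since the kernel comparison and the final majorization by the full continuous integral both require a nonnegative integrand). The reduction of the off-critical case to the critical line also checks out: setting $\tfrac{1}{r_0}=1-\tfrac{1}{s}+\tfrac{\alpha}{N}$, the hypotheses $s>1$ and $\tfrac{1}{r}+\tfrac{1}{s}+\tfrac{N-\alpha}{N}\ge 2$ (together with $\tfrac1r<1$) give $0<\tfrac{1}{r_0}\le\tfrac{1}{r}<1$, so $r_0\in(1,\infty)$, $r_0\ge r$, and the nesting \eqref{p-q} applies. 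The only cosmetic caveat is that your constant inevitably depends on $N$ as well as on $r,s,\alpha$ (through $(1+\sqrt{N})^{N-\alpha}$ and the continuous HLS constant), which is consistent with how the lemma is actually used in the paper. What your approach buys over the citation is transparency and a clean identification of the single discrete ingredient ($|x-y|\ge1$) that makes the continuous theory transfer; what it costs is reliance on the full strength of continuous HLS, whereas a direct discrete proof (as in the cited reference) can proceed without leaving $\mathbb{Z}^N$.
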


\section{The existence of a minimizer for \eqref{a-1}}

This section is dedicated to presenting the proof of Theorem
\ref{theorem1-3}. For the convenience, we denote
\begin{equation*}
I_m:=\inf\left\{E(\varphi):\varphi\in
H^1(\mathbb{Z}^{3}),\sum_{y\in\mathbb{Z}^{3}} {\varphi^{2}(y)}=m\right\}
\end{equation*}
and
$$F(s):=s^{\frac{5}{3}}-s^{\frac{4}{3}},\,\,\,D(f,g)=\sum_{x,y\in\mathbb{Z}^{3}\atop~\ x\neq y\hfill}\frac{f^2(x)g^2(y)}{|x-y|}.$$

We begin by establishing a property of strict subadditivity for $I_m$.

\begin{lemma}\label{lemma5.1}
If $m>0$ is small enough, then for any $m_1\in(0,m)$ it holds that
\begin{equation}\label{q-1}
I_m<I_{m_1}+I_{m-m_1}.
\end{equation}
\end{lemma}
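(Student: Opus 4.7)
The plan is to show that for small $m$ the infimum $I_m$ is of order $-m^{5/3}$, and then derive strict subadditivity from the strict superadditivity of the power function $t\mapsto t^{5/3}$.

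\emph{Step 1 (upper bound).} I would construct a ``tent'' test function on a lattice ball:
\[
\varphi_{m,R}(x)=a_{m,R}\cdot\max\Bigl(0,\;1-\tfrac{|x|_1}{R}\Bigr),\qquad \sum_{y\in\mathbb{Z}^3}\varphi_{m,R}^2(y)=m,
\]
with $R$ to be optimized. Using Lemma \ref{lemma2-2} for the asymptotics of $|B_R|$ and $|\partial B_R|$, direct summation gives
\[
\mathcal{E}(\varphi_{m,R})\sim\frac{m}{R^{2}},\qquad \sum_{y}\varphi_{m,R}^{10/3}\sim\frac{m^{5/3}}{R^{2}},\qquad \sum_{y}\varphi_{m,R}^{8/3}\sim\frac{m^{4/3}}{R},\qquad D(\varphi_{m,R},\varphi_{m,R})\sim\frac{m^{2}}{R}.
\]
For small $m$, balancing the dominant positive $m/R^{2}$ against the dominant negative $-m^{4/3}/R$ yields optimal $R\sim m^{-1/3}$, and the resulting energy is $E(\varphi_{m,R})\leq -c_{0}\,m^{5/3}$ for a concrete $c_{0}>0$; hence $I_m\leq -c_{0}\,m^{5/3}$ for all sufficiently small $m$.

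\emph{Step 2 (lower bound).} I would derive a matching lower bound $I_m\geq -C_{0}\,m^{5/3}$ by establishing a discrete Gagliardo--Nirenberg-type inequality
\[
\sum_{y}\varphi^{8/3}(y)\;\leq\; K\,\mathcal{E}(\varphi)^{1/2}\,\|\varphi\|_{\ell^2(\mathbb{Z}^{3})}^{5/3},
\]
obtained by combining the embedding \eqref{p-q} with an interpolation exploiting the discrete gradient. Discarding the nonnegative terms $\sum_{y}\varphi^{10/3}$ and $D(\varphi,\varphi)$ and completing the square in $\mathcal{E}(\varphi)^{1/2}$ gives $E(\varphi)\geq -K^{2}m^{5/3}/4$, so that $I_m\geq -C_{0}\,m^{5/3}$ with $C_{0}=K^{2}/4$.

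\emph{Step 3 (strict subadditivity).} The strict superadditivity of $t\mapsto t^{5/3}$ on $(0,\infty)$ gives $m^{5/3}>m_{1}^{5/3}+(m-m_{1})^{5/3}$ for every $m_{1}\in(0,m)$. With $c_{0}$ and $C_{0}$ arranged to match (e.g.\ by sharpening the test function in Step 1 or the Gagliardo--Nirenberg constant in Step 2), this inequality can be translated into the Lions-type scaling condition $I_{tm}<t\,I_m$ for all $t>1$ and all sufficiently small $m$. Given this scaling, the strict subadditivity \eqref{q-1} follows by the standard argument: taking $t=m/m_{1}>1$ yields $I_m<(m/m_1)I_{m_1}$ i.e.\ $(m_1/m)I_m<I_{m_1}$, and similarly $((m-m_1)/m)I_m<I_{m-m_1}$; summing gives $I_m<I_{m_1}+I_{m-m_1}$.

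The main obstacle will be the tight coordination of the constants in Steps 1 and 2, i.e.\ establishing the sharp asymptotic $I_m\sim -c_{*}\,m^{5/3}$ with a single well-defined constant $c_{*}$. If this is not achievable directly, an alternative is the following more quantitative comparison: for fixed $m_{1}\in(0,m)$, take the optimal tent of radius $R(m)\sim m^{-1/3}$ for the full mass $m$ and compare its energy term by term with the sum of optimal tents for $m_{1}$ and $m-m_{1}$ placed far apart; the surface-to-volume economy of a single ball over two balls of the same total mass produces a deficit of order $m^{5/3}-m_{1}^{5/3}-(m-m_{1})^{5/3}>0$ that can be transferred to the infima $I_m$, $I_{m_1}$, $I_{m-m_1}$ by controlling the Coulomb cross-term via the separation, which is the principal technical input.
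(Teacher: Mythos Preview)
Your plan differs substantially from the paper's and contains a genuine gap in Step~3. The paper does \emph{not} try to compute $I_m$ asymptotically. Instead it takes any near-minimizer $\varphi$ for $I_m$ and plugs $\sqrt{\theta}\,\varphi$ directly as a competitor for $I_{\theta m}$; the excess
\[
E(\sqrt{\theta}\,\varphi)-\theta E(\varphi)=(\theta^{5/3}-\theta)\sum_{y}\varphi^{10/3}-(\theta^{4/3}-\theta)\sum_{y}\varphi^{8/3}+(\theta^{2}-\theta)D(\varphi^2,\varphi^2)
\]
is then controlled term by term using the embedding $\ell^2\hookrightarrow\ell^p$ and the discrete HLS inequality, and for small $m$ the negative $-(\theta^{4/3}-\theta)m^{4/3}$ contribution dominates. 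This yields $I_{\theta m}<\theta I_m$ with no knowledge of the size of $I_m$; the standard Lions argument (which you reproduce correctly at the end of Step~3) then gives \eqref{q-1}.

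Your route, by contrast, tries to sandwich $I_m$ between $-C_0 m^{5/3}$ and $-c_0 m^{5/3}$ and then invoke strict superadditivity of $t\mapsto t^{5/3}$. But from two-sided bounds with $c_0<C_0$ you get only $I_{tm}\leq -c_0 t^{5/3}m^{5/3}$ and $tI_m\geq -C_0 t m^{5/3}$, hence $I_{tm}<tI_m$ only when $t^{2/3}>C_0/c_0$, not for all $t>1$. You correctly flag this as ``the main obstacle,'' but resolving it would require the sharp constant $c_0=C_0$---essentially a discrete Gagliardo--Nirenberg extremal problem---and even that is insufficient near $t=1$ without control of lower-order terms. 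Your fallback (comparing two explicit tent configurations placed far apart) only compares test-function energies, not the infima $I_{m_1}$, $I_{m-m_1}$ themselves, so it cannot bound them from below. The paper's device of rescaling the near-minimizer is exactly what closes this gap with almost no work.
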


\begin{proof}
First, we claim that if $\theta>1$, then the equality
\begin{equation}\label{q-2}
I_\theta m<\theta I_m
\end{equation}
holds for $m>0$ is small enough. Indeed, let $\varphi$ satisfying
$I_m\leq E(\varphi)\leq I_m+\varepsilon$ and
$\|\varphi\|_{\ell^2}^2=m$. By \eqref{p-q} and the
Hardy-Littlewood-Sobolve inequality \eqref{b-1}, we have
\begin{equation*}
\begin{split}
I_{\theta m}\leq E\left(\sqrt\theta \varphi\right)
=~&\theta\sum_{y\in\mathbb{Z}^{3}}|\nabla\varphi(y)|^2+ \theta^\frac{5}{3}\sum_{y\in\mathbb{Z}^{3}}\varphi^{\frac{10}{3}}(y)-\theta^\frac{4}{3}\sum_{y\in\mathbb{Z}^{3}}\varphi^{\frac{8}{3}}(y)+ \theta^2D(\varphi^2,\varphi^2)\\
=~&\theta \sum_{y\in\mathbb{Z}^{3}}\left(|\nabla\varphi(y)|^2+
\varphi^{\frac{10}{3}}(y)-\varphi^{\frac{8}{3}}(y)\right) +
\theta D(\varphi^2,\varphi^2)
+(\theta^\frac{5}{3}-\theta)\sum_{y\in\mathbb{Z}^{3}}\varphi^{\frac{10}{3}}(y)\\
&+(\theta-\theta^\frac{4}{3})\sum_{y\in\mathbb{Z}^{3}}\varphi^{\frac{8}{3}}(y)+(\theta^2-\theta)D(\varphi^2,\varphi^2)\\
\leq ~&\theta
(I_m+\varepsilon)+(\theta^\frac{5}{3}-\theta)\|\varphi\|_{\ell^2(\mathbb{Z}^3)}^\frac{10}{3}+(\theta-\theta^\frac{4}{3})
\|\varphi\|_{\ell^2(\mathbb{Z}^3)}^\frac{8}{3}+(\theta^2-\theta)C\|\varphi\|_{\ell^\frac{12}{5}(\mathbb{Z}^3)}^4\\
\leq ~&\theta
(I_m+\varepsilon)+(\theta^\frac{5}{3}-\theta)m^\frac{5}{3}+(\theta-\theta^\frac{4}{3})
m^\frac{4}{3}+(\theta^2-\theta)Cm^2.
\end{split}
\end{equation*}
where $C$ is a positive constant. Then we can choose $m$ small enough such that
$$(\theta^\frac{5}{3}-\theta)m^\frac{5}{3}+(\theta-\theta^\frac{4}{3})
m^\frac{4}{3}+(\theta^2-\theta)Cm^2<0.$$ 
As a consequence, we get that 
$$I_{\theta m}<\theta I_m.$$ 
We take $\theta>1$ such that $\theta m_1=m$. Then
$m-m_1=\frac{\theta-1}{\theta}m$. From \eqref{q-2} we get that
\begin{equation}\label{q-3}
I_m=I_{\theta m_1}<\theta I_{m_1}
\end{equation}
and
\begin{equation}\label{q-4}
I_m=I_{\frac{\theta}{\theta-1}
(m-m_1)}<\frac{\theta}{\theta-1}I_{m-m_1}.
\end{equation}
Combining with \eqref{q-3} and \eqref{q-4}, it follows that
\begin{equation*}
I_m=\frac{1}{\theta}I_m+\frac{\theta-1}{\theta}I_m<\frac{1}{\theta}\theta
I_{m_1}+\frac{\theta-1}{\theta}\frac{\theta}{\theta-1}I_{m-m_1}=I_{m_1}+I_{m-m_1}.
\end{equation*}
Thus we complete the proof.
\end{proof}

Now we are ready to give the proof of Theorem $\ref{theorem1-3}$.

\begin{proof}[Proof of Theorem  $\ref{theorem1-3}$.]
 Let
$\{\varphi_n\}\in {\ell^2}(\mathbb{Z}^{3})$ be a  minimizing
sequence such that 
$$\|\varphi_n\|_{\ell^2(\mathbb{Z}^{3})}^2=m\quad \mathrm{and}\quad 
\mathop{\lim}\limits_{n\rightarrow\infty}E(\varphi_n)=I_m.$$ 
Since the sequence $\{\varphi_n\}$ is bounded in ${H^1}(\mathbb{Z}^{3})$,
it follows that ${\varphi_n} \rightharpoonup \varphi$ in
${H^1}(\mathbb{Z}^{3})$ and ${\varphi_n} \to \varphi$ pointwise on
$\mathbb{Z}^{3}$. Since the $\ell^2$ norm is weakly lower
semi-continuous, it leads to
$$0\leq\|\varphi\|_{\ell^2(\mathbb{Z}^{3})}^2\le m.$$ 
Next, we divide the proof into the following three steps 
\medskip

\noindent {\bf Step 1}: The sequence $\{\varphi_n\}$ is non-vanishing. By
\eqref{H-2} and the inequality \eqref{p-q}, then
\begin{equation*}
E(\varphi_n)\geq \sum_{y\in\mathbb{Z}^{3}}|\nabla\varphi_n(y)|^2-
\sum_{y\in\mathbb{Z}^{3}} \varphi^{\frac{8}{3}}_n(y) \geq
Cm-m^\frac{4}{3},
\end{equation*}
where $C$ is some positive constant. Since $E'(\varphi_n)\varphi_n=0$, it follows that
\begin{equation*}
\sum_{y\in\mathbb{Z}^{3}}|\nabla\varphi_n(y)|^2+
\frac{10}{3}\sum_{y\in\mathbb{Z}^{3}}\varphi^{\frac{10}{3}}_n(y)
-\frac{8}{3}\sum_{y\in\mathbb{Z}^{3}}\varphi^{\frac{8}{3}}_n(y) +
4D(\varphi_n^2,\varphi_n^2)=0.
\end{equation*}
Thus, by the interpolation inequality, we get
\begin{equation*}
\begin{split}
Cm-m^\frac{4}{3}&\le\lim_{n\to\infty}E(\varphi_n)=\lim_{n\to\infty}\left(-\frac{7}{3}\sum_{y\in\mathbb{Z}^{3}}\varphi^{\frac{10}{3}}_n(y)
+\frac{5}{3}\sum_{y\in\mathbb{Z}^{3}}\varphi^{\frac{8}{3}}_n(y)
-3D(\varphi_n^2,\varphi_n^2)\right)\\
&\le \lim_{n\to\infty}\frac{5}{3}\sum_{y\in\mathbb{Z}^{3}}\varphi^{\frac{8}{3}}_n(y)\le\lim_{n\to\infty}\frac{5}{3}\|\varphi_n\|_{\ell^2(\mathbb{Z}^{3})}^2\|\varphi_n\|_{\ell^\infty(\mathbb{Z}^{3})}^{\frac{2}{3}}=\lim_{n\to\infty}\frac{5}{3}m\|\varphi_n\|_{\ell^\infty(\mathbb{Z}^{3})}^{\frac{2}{3}},
\end{split}
\end{equation*}
which implies
$$\lim\limits_{n\to\infty}\|\varphi_n\|_{\ell^\infty(\mathbb{Z}^{3})}>\frac{1}{2}C>0$$
provided $m$ is small enough. Since
$\|\varphi_n\|_{\ell^2(\mathbb{Z}^{3})}^2=m$, it follows that the
maximum of $|\varphi_n|$ is attainable in a bounded subset of
$\mathbb{Z}^{3}$. We set
$\widetilde{\varphi}_n(y) := \varphi_n(y-{y_n})$, where $y_n$ is chosen such that
$|\varphi_n(y_n)|=\max\limits_{y}|\varphi_n(y)|$. Taking a
subsequence if necessary, we have
$$\widetilde{\varphi}_n \to \widetilde{\varphi} \,\,\,\,\,\, \text{pointwise}\,\,\,\text{in} \,\,\, \mathbb{Z}^{3}$$
and
$$|\widetilde{\varphi}(0)|=\lim_{n\to\infty}\|\widetilde{\varphi}_n\|_{\ell^\infty(\mathbb{Z}^{3})}>0.$$
By translation invariance, we can choose  a minimizing sequence such that the limit $\varphi\ne0$.
\medskip

\noindent {\bf Step 2}: We claim that
$\|\varphi\|_{\ell^2(\mathbb{Z}^{3})}^2=m$. Using a contradiction
argument, suppose that
\begin{equation*}
0<\|\varphi\|_{\ell^2(\mathbb{Z}^{3})}^2=m_1<m.
\end{equation*}
By the  Lemma \ref{lemmaB-L}, we get that
\begin{equation*}
\|\varphi_n-\varphi\|_{\ell^2(\mathbb{Z}^{3})}^2=m-m_1.
\end{equation*}
From Lemmas \ref{lemmaB-L}-\ref{lemma-nonlocal} and
\cite[Corollary 11]{Hua-Li-2021} that
\begin{equation*}
I_m=\lim_{n\rightarrow\infty}E(\varphi_n)=\lim_{n\rightarrow\infty}E(\varphi_n-\varphi)+E(\varphi)\ge
I_{m_1}+I_{m-m_1}.
\end{equation*}
This is in contradiction to the inequality \eqref{q-1}.
\medskip

\noindent {\bf Step 3}: $\varphi$ is the minimizer of the functional $E$, i.e.
$ E(\varphi)=I_m$. According to the infimum  of the $I_m$ and
$\|\varphi\|_{\ell^2(\mathbb{Z}^{3})}^2=m$, then  $ E(\varphi)\ge
I_m$. By the inequality \eqref{p-q}, we get
\begin{equation*}
\|\varphi_n-\varphi\|_{\ell^{\frac{8}{3}}(\mathbb{Z}^{3})}\le\|\varphi_n-\varphi\|_{\ell^2(\mathbb{Z}^{3})}.
\end{equation*}
Then it follows that $\mathop {\lim
}\limits_{n\to\infty}\|\varphi_n\|_{\ell^{\frac{8}{3}}(\mathbb{Z}^{3})}
=\|\varphi\|_{\ell^{\frac{8}{3}}(\mathbb{Z}^{3})}$. Using
the Fatou's Lemma we have
\begin{equation*}
\begin{split}
I_m=\lim_{n\rightarrow\infty}E(\varphi_n)
&=\lim_{n\rightarrow\infty}\left(\sum_{y\in\mathbb{Z}^{3}}\left(|\nabla\varphi_n(y)|^2+
F(\varphi_n^2(y))\right) +
D(\varphi_n^2,\varphi_n^2)\right)\\
&\ge \sum_{y\in\mathbb{Z}^{3}}\left(|\nabla\varphi(y)|^2+
F(\varphi^2(y))\right) + D(\varphi^2,\varphi^2)=E(\varphi).
\end{split}
\end{equation*}
This implies that $ E(\varphi)=I_m$. Therefore, $\varphi$ is a
minimizer of the functional $E$ provided the mass $m$ is small enough. Hence, we finish the proof.
\end{proof}

\section{The nonexistence of a minimizer for \eqref{a-1}}

In this section, we will establish the proof for Theorem \ref{theorem1-2}. The key idea is to demonstrate that the minimizer $\varphi$ of \eqref{a-1} must have a uniform bound in the $l^2(\mathbb{Z}^3)$ norm, i.e., $m:=\sum\limits_{y\in\mathbb{Z}^3}\varphi^2<C$, where $C$ is a finite positive constant. Consequently, if $m>C$, the functional \eqref{a-1} cannot have any minimizer. To begin with, we will examine this fact by analyzing the following elementary property of $\varphi$.

\begin{lemma}\label{lemma4.1}
For every $x$ in $\mathbb{Z}^{3}$, then we have
$\varphi(x)\in[0,(\frac{4}{5})^\frac{3}{2}]$ and
$-\varphi^2(x)\lesssim F({\varphi}^2) \leq0$.
\end{lemma}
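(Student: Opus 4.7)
\emph{Sketch.} Replacing $\varphi$ by $|\varphi|$ only decreases the Dirichlet form while preserving all other terms and the mass, so we may assume $\varphi\ge 0$. Since $\varphi\in\ell^2(\mathbb{Z}^3)$ decays at infinity, it attains its supremum at some vertex $x_0$; if $\varphi(x_0)=0$ the bound is trivial, so suppose $\varphi(x_0)>0$. The plan is to compare $\varphi$ against a competitor that siphons a small mass $\delta>0$ out of $x_0$ and pours it uniformly onto a distant ball $B_R(y_N)$:
\[
\tilde\varphi(y)=\begin{cases}\sqrt{\varphi^2(x_0)-\delta},& y=x_0,\\ a,& y\in B_R(y_N),\\ \varphi(y),&\text{otherwise},\end{cases}
\]
where $a\approx\sqrt{\delta/|B_R(y_N)|}$ is adjusted so that $\|\tilde\varphi\|_{\ell^2}^2=m$ (the mass of $\varphi^2$ inside $B_R(y_N)$ is made arbitrarily small by choosing $|y_N|$ large, using $\varphi\in\ell^2$). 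This uniformly-spread cloud is the discrete substitute for the continuous scaling function $\psi_\eta$ used in \cite{J. Lu-2014}.

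Next I would expand $E(\tilde\varphi)-E(\varphi)$. Writing $\eta:=\varphi(x_0)-\sqrt{\varphi^2(x_0)-\delta}=\frac{\delta}{2\varphi(x_0)}+O(\delta^2)$ and $U(x_0):=\sum_{y\ne x_0}\varphi^2(y)/|x_0-y|$, the change arising from modifying $\varphi$ at $x_0$ alone is
\[
-2\eta\,\Delta\varphi(x_0)+6\eta^2+\bigl[F(\varphi^2(x_0)-\delta)-F(\varphi^2(x_0))\bigr]-2\delta\,U(x_0).
\]
The contributions from the cloud must be shown to vanish as $R,|y_N|\to\infty$: its Dirichlet form is supported on $\partial B_R(y_N)$, so it is $\lesssim|\partial B_R|\cdot a^2\asymp \delta/R$; its TFD piece is $|B_R|\,F(\delta/|B_R|)\lesssim \delta^{4/3}|B_R|^{-1/3}$; its self-Coulomb is $\lesssim \delta^2|B_R|^{-1/3}$ by the discrete Hardy-Littlewood-Sobolev inequality \eqref{b-1}; and its cross-Coulomb with $\varphi$ is $\lesssim \delta\,m/|y_N|$, using the $\ell^2$-summability of $\varphi$. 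Plugging these estimates into $E(\tilde\varphi)\ge E(\varphi)$, passing $R,|y_N|\to\infty$, then dividing by $\delta$ and sending $\delta\to 0^+$, yields the pointwise "supersolution" estimate
\[
\frac{\Delta\varphi(x_0)}{\varphi(x_0)}+F'(\varphi^2(x_0))+2\,U(x_0)\le 0.
\]

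At the maximum $x_0$ one has $\Delta\varphi(x_0)\ge 0$ and $U(x_0)\ge 0$, forcing $F'(\varphi^2(x_0))\le 0$. Since $F'(s)=\frac{5}{3}s^{2/3}-\frac{4}{3}s^{1/3}$, this is equivalent to $\varphi^{2/3}(x_0)\le 4/5$, i.e.\ $\varphi(x_0)\le (4/5)^{3/2}$, and the bound then holds at every vertex. The remaining assertions are algebraic: $\varphi^{2/3}\le 4/5<1$ gives $F(\varphi^2)=\varphi^{8/3}(\varphi^{2/3}-1)\le 0$, while $|F(\varphi^2)|\le\varphi^{8/3}\le\varphi^2$ (again from $\varphi\le 1$) delivers $-\varphi^2\lesssim F(\varphi^2)\le 0$. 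The delicate step is the simultaneous vanishing of the cloud's self-Coulomb and of its cross-Coulomb with the (possibly non-compact) $\ell^2$-tail of $\varphi$; this is precisely where the lattice argument diverges from the continuous proof, and where the discrete HLS \eqref{b-1} together with the summability $\varphi\in\ell^2(\mathbb{Z}^3)$ must compensate for the unavailability of continuous scaling.
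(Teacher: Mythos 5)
Your argument is essentially correct but follows a genuinely different route from the paper. The paper proves the bound globally: it truncates $\varphi$ at the level $(\tfrac45)^{3/2}$, setting $\varphi_1=\min\{\varphi,(\tfrac45)^{3/2}\}$, shows $E(\varphi)\leq E(\varphi_1)+I_{m-m_1}$ by gluing on a far-away near-minimizer, proves $I_{m-m_1}\leq 0$ by an explicit spreading ``cone'' test function $\psi_n$ (their \eqref{d-2}), and then gets a strict contradiction from $F((\tfrac45)^3)=\min F$ together with $|\nabla\varphi_1|\leq|\nabla\varphi|$ and $D(\varphi_1^2,\varphi_1^2)\leq D(\varphi^2,\varphi^2)$. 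You instead derive a one-sided Euler--Lagrange inequality $\frac{\Delta\varphi(x_0)}{\varphi(x_0)}+F'(\varphi^2(x_0))+2U(x_0)\leq 0$ at the maximum point (with the paper's sign convention $\Delta u(x)=\sum_{y\sim x}(u(x)-u(y))$, so indeed $\Delta\varphi(x_0)\geq0$ there) and conclude by the discrete maximum principle. Both proofs hinge on the same mechanism --- mass can be parked at infinity in a diffuse state at asymptotically zero energy cost --- realized by your constant cloud on $B_R(y_N)$ versus the paper's cone $\psi_n$; your version buys a pointwise supersolution inequality (stronger information than the paper extracts), at the price of more delicate Taylor-expansion bookkeeping, while the paper's truncation argument avoids differentiating $F$ and reuses $\psi_n$, which is the technical heart of their Section 4 anyway. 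One estimate in your sketch is wrong as stated: the cross-Coulomb bound $\lesssim \delta m/|y_N|$ fails because $\varphi$ is not compactly supported, so vertices carrying mass of $\varphi$ can sit arbitrarily close to $B_R(y_N)$ no matter how large $|y_N|$ is. The fix is the tool you already name: by the discrete Hardy--Littlewood--Sobolev inequality \eqref{b-1} and \eqref{p-q}, the cross term is at most $C\|a\mathbf{1}_{B_R}\|_{\ell^{12/5}}^2\|\varphi\|_{\ell^{12/5}}^2\lesssim \delta\, m\,|B_R|^{-1/6}$, which vanishes as $R\to\infty$ uniformly in $y_N$; with that substitution (and sending $|y_N|\to\infty$ first, for fixed $R$, to kill the terms coming from overwriting $\varphi$ on $B_R(y_N)$), your limit procedure goes through and the final algebra $-\varphi^2\leq\varphi^{8/3}(\varphi^{2/3}-1)=F(\varphi^2)\leq0$ is correct.
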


\begin{proof}
As $E(|\varphi|)\leq E(\varphi)$, we can focus our analysis on the case where $\varphi\geq0$. Consider the function $F(s)=s^{\frac{5}{3}}-s^{\frac{4}{3}}$, we have $\min\limits_{s\geq0}F(s)=F\left((\frac{4}{5})^3\right)$. We define $\varphi_1=\min\left\{\varphi,(\frac{4}{5})^\frac{3}{2}\right\}$ and $m_1:=\sum\limits_{y\in\mathbb{Z}^{3}}\varphi_1^2(y)$, where it is evident that $m_1\leq m$. Next, we will proceed with our argument in the following three steps:
\medskip

\noindent {\bf Step 1}. We claim that
\begin{equation}
\label{d-1}
E(\varphi)\leq E(\varphi_1)+I_{m-m_1} .
\end{equation}
Indeed, for any $\varepsilon>0$, choosing $\psi$ with
$\sum\limits_{y\in\mathbb{Z}^{3}} \psi^2(y) = m-m_1$ and $I_{m-m_1}\leq
E(\psi) \leq I_{m-m_1}+\varepsilon$. 
We then define
\begin{equation*}
\psi_L(x):=C_L[\varphi_1(x)+\psi(x+ Le_1)],
\end{equation*}
where $e_1:= (1,0,0) \in \mathbb{Z}^{3}$ and $C_L$ is a normalization constant such that 
$$\sum\limits_{y\in\mathbb{Z}^{3}} \psi_L^2(y)=\sum\limits_{y\in\mathbb{Z}^{3}} \varphi^2(y)= m.$$ Moreover, it is not difficult to check that
\begin{equation*}
\begin{split}
\sum\limits_{y\in\mathbb{Z}^{3}}\psi_L^2(y) &
=C_L^2\left(\sum\limits_{y\in\mathbb{Z}^{3}}\varphi_1^2(y)+\sum\limits_{y\in\mathbb{Z}^{3}}\psi^2(y)+2\sum\limits_{y\in\mathbb{Z}^{3}}\varphi_1(y)\psi\left(y+ Le_1\right)\right)\\
&=C_L^2\left(m+2\sum_{y\in\mathbb{Z}^{3}}\varphi_1(y)\psi\left(y+
Le_1\right)\right).
\end{split}
\end{equation*}
On the other hand, it is worth noting that $C_L \to 1$ as $L$ approaches infinity. Consequently
\begin{equation}
\label{l-d-1}
\limsup_{L\to\infty} E(\psi_L)= E(\varphi_1)+E(\psi)\leq
E(\varphi_1)+I_{m-m_1}+\varepsilon.
\end{equation}
Using the fact that $\varphi$ is the minimizer of \eqref{a-1}, we get that $E(\varphi)\leq E(\psi_L)$ for any $L$. Sending $\varepsilon\to0$ in \eqref{l-d-1}, we arrive at \eqref{d-1}.
\medskip

\noindent {\bf Step 2}. In this step we shall show that 
\begin{equation}
\label{4-1}
E(\varphi) \leq E(\varphi_1).
\end{equation}
In order to achieve this inequality, it suffices to verify that
\begin{equation}
	\label{4-2}
I_{m-m_1} = \inf_{\sum\limits_{y\in\mathbb{Z}^{3}}\psi^2(y)=
m-m_1} E(\psi) \leq 0.
\end{equation}
Now we define the function $\psi_n$ in the following way:
\begin{equation}\label{d-2}
\psi_n(x):=
\begin{cases}
(n-\ell+1)d, &\ell\leq n,\\
0, & \ell>n,
\end{cases}
\end{equation}
where
\begin{equation*}
\ell=d(x,0)\quad\text{and}\quad
d^2=\frac{m-m_1}{\sum\limits_{\ell=1}^{n}(n-\ell+1)^2(4\ell^2+2)+(n+1)^2}.
\end{equation*}
To obtain a more precise comprehension of the meaning of $\psi_n$, we provide a schematic diagram in Figure 3.
\begin{figure}[htbp]
  \centering
  \includegraphics[width=0.28\textwidth]{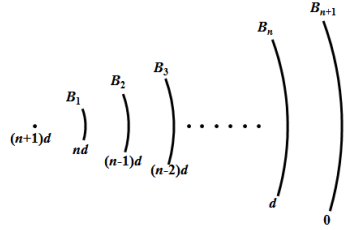}
  \caption{The sketch of $\psi_n$}\label{fig:3}
\end{figure}

\noindent  With the help of Lemma \ref{lemma2-2}, we can easily compute
that
\begin{equation*}
\begin{aligned}
\sum\limits_{y\in\mathbb{Z}^{3}} \psi_n^2(y)
=~&(n+1)^2d^2+\sum_{\ell=1}^{n}\sum_{y\in\partial B_\ell(0)\cap\mathbb{Z}^{3}}\psi_n^2(y)\\
=~&\left[(n+1)^2+\sum_{\ell=1}^{n}(n-\ell+1)^2(4\ell^2+2)\right]d^2=m-m_1.
\end{aligned}
\end{equation*}
Regarding the gradient term, using Lemma \ref{lemma2-2} we have
\begin{equation}
\label{l-d-2}
\begin{aligned}
\sum_{y\in\mathbb{Z}^{3}}|\nabla\psi_n|^2&=\frac12\sum_{y\in\mathbb{Z}^{3}}\sum_{z\sim y,\atop z\in\mathbb{Z}^3}(\psi_n(x)-\psi_n(y))^2\leq Cd^2\sum_{\ell=1}^{n+1}(4\ell^2+2)\\
&=\frac{C(m-m_1)\sum\limits_{\ell=1}^{n+1}(4\ell^2+2)}{\sum\limits_{\ell=1}^{n}(n-\ell+1)^2(4\ell^2+2)+(n+1)^2}.
\end{aligned}
\end{equation}
After direct computation, we get that
\begin{equation*}
a_n:=\sum_{\ell=1}^{n+1}(4\ell^2+2)=\frac23(2n^3+9n^2+16n+9),
\end{equation*}
and
\begin{equation*}
b_n:=\sum_{\ell=1}^{n}(n-\ell+1)^2(4\ell^2+2)+(n+1)^2=
\frac{1}{15}(2n^5+10n^4+30n^3+50n^2+43n+15).
\end{equation*}
Therefore, we have $\lim\limits_{n\to+\infty}\frac{a_n}{b_n}=0$
and it implies that
\begin{equation}
\label{d-3}
\lim_{n\to+\infty}\sum_{y\in\mathbb{Z}^{3}}|\nabla\psi_n(y)|^2 =0.
\end{equation}
While for the nonlinear term, after a straightforward calculation we obtain that
\begin{align*}
\sum_{y\in\mathbb{Z}^{3}}\psi_n^\frac{10}{3}(y)
=&\sum_{\ell=1}^{n}(n-\ell+1)^\frac{10}{3}d^\frac{10}{3}(4\ell^2+2)+(n+1)^\frac{10}{3}d^\frac{10}{3}\\
=&~d^\frac{10}{3}\sum_{\ell=1}^{n}\left((n-\ell+1)^2(4\ell^2+2)^\frac{3}{5}\right)^\frac{5}{3}
+(n+1)^\frac{10}{3}d^\frac{10}{3}\\
\leq&\left(d^2\sum_{\ell=1}^{n}(n-\ell+1)^2(4\ell^2+2)^\frac{3}{5}\right)^\frac{5}{3}
+\left((n+1)^2d^2\right)^\frac{5}{3}\\
=&\left(\frac{(m-m_1)\sum\limits_{\ell=1}^{n}(n-\ell+1)^2(4\ell^2+2)^\frac{3}{5}}{\sum\limits_{\ell=1}^{n}(n-\ell+1)^2(4\ell^2+2)+(n+1)^2}\right)^\frac{3}{5}+\left(\frac{(m-m_1)(n+1)^2}{\sum\limits_{\ell=1}^{n}(n-\ell+1)^2(4\ell^2+2)+(n+1)^2}\right)^\frac{5}{3}.
\end{align*}
We set
\begin{equation*}
c_n:=\sum_{\ell=1}^{n}(n-\ell+1)^2(4\ell^2+2)^\frac{3}{5}\quad\text{and}\quad d_n=(n+1)^2.
\end{equation*}
We notice that
\begin{equation*}
c_n\leq (4n^2+2)^{\frac35}\sum_{\ell=1}^n\ell^2\leq \frac13(n+1)^3(4n^2+2)^{\frac35}.
\end{equation*}
Then one can easily see that
\begin{equation}
\label{l-d-3}
\lim_{n\to \infty} \frac{c_n}{b_n}=0\quad\mathrm{and}\quad \lim_{n\to \infty} \frac{d_n}{b_n}=0.
\end{equation}
This implies that 
$$\sum_{y\in\mathbb{Z}^{3}}\psi_n^\frac{10}{3}(y)\to
0\quad\mathrm{as}\quad n\to \infty.$$ 
Similarly, we can prove
$$\sum_{y\in\mathbb{Z}^{3}}\psi_n^\frac{8}{3}(y)\to
0\quad\mathrm{as}\quad n\to \infty.$$ 
Therefore, 
\begin{equation}\label{d-4}
\lim_{n\to \infty}\sum_{y\in\mathbb{Z}^{3}}F(\psi_n^2(y))=0.
\end{equation}
Finally, we study the nonlocal term. From the
Hardy-Littlewood Soblve inequality \eqref{b-1} we derive that
\begin{equation*}
\sum_{y\in\mathbb{Z}^{3}}\sum_{x\neq y,\atop x\in\mathbb{Z}^3}
\frac{\psi_n^2(y)\psi_n^2(x)}{|y-x|} \leq C
\|\psi_n\|_{\ell^\frac{12}{5}(\mathbb{Z}^{3})}^4.
\end{equation*}
By a direct computation, one has
\begin{equation*}
\begin{split}
\sum_{y\in\mathbb{Z}^{3}}\psi_n^\frac{12}{5}(y)
=&\left(\sum_{\ell=1}^{n}(n-\ell+1)^\frac{12}{5}(4\ell^2+2)+(n+1)^\frac{12}{5}\right)d^\frac{12}{5}\\
\leq&\left(\sum_{\ell=1}^{n}(n-\ell+1)^2(4\ell^2+2)^\frac{5}{6}d^2\right)^\frac{6}{5}
+\left((n+1)^2d^2\right)^\frac{6}{5}\\
=&\left(\frac{(m-m_1)\sum\limits_{\ell=1}^{n}(n-\ell+1)^2(4\ell^2+2)^\frac{5}{6}}{\sum\limits_{\ell=1}^{n}(n-\ell+1)^2(4\ell^2+2)+(n+1)^2}\right)^\frac{6}{5}+\left(\frac{(m-m_1)(n+1)^2}{\sum\limits_{\ell=1}^{n}(n-\ell+1)^2(4\ell^2+2)+(n+1)^2}\right)^\frac{6}{5}.
\end{split}
\end{equation*}
We define
\begin{equation*}
e_n:=\sum_{\ell=1}^{n}(n-\ell+1)^2(4\ell^2+2)^\frac{5}{6}.
\end{equation*}
Obviously, we have
\begin{equation*}
e_n\leq (4n^2+2)^\frac{5}{6}\sum_{\ell=1}^{n}\ell^2\leq \frac13(n+1)^3(4n^2+2)^\frac{5}{6},
\end{equation*}
then
\begin{equation*}
\lim_{n\to \infty} \frac{e_n}{b_n}=0.
\end{equation*}
Together with \eqref{l-d-3} we have
\begin{equation}\label{d-5}
\lim_{n\to \infty}\sum_{y\in\mathbb{Z}^{3}}\sum_{x\neq y,\atop x\in\mathbb{Z}^3}
\frac{\psi_n^2(y)\psi_n^2(x)}{|x-y|}=0.
\end{equation}
From \eqref{d-3}-\eqref{d-5}, one deduces that $E(\psi_n)\to 0$ as
$n\to\infty$, which together with \eqref{d-1} implies that
\begin{equation}\label{d-6}
E(\varphi)\leq E(\varphi_1).
\end{equation}

\noindent {\bf Step 3}. We claim that
$$\varphi(x)\leq\left(\frac{4}{5}\right)^\frac{3}{2},\quad\forall x\in\mathbb{Z}^{3}.$$ 
We prove the above claim by contradiction. Suppose that
there exists a point $x_0 \in \mathbb{Z}^{3} $ such that
$\varphi(x_0)>(\frac{4}{5})^\frac{3}{2}$. Then, one has
\begin{align*}
\sum_{y\in\mathbb{Z}^{3}} F(\varphi_1^2(y))
&=\sum_{\left\{y\in\mathbb{Z}^{3}:\varphi(y)>(4/5)^\frac{3}{2}\right\}}F(\varphi_1^2(y))
+\sum_{\left\{y\in \mathbb{Z}^{3}:\varphi(y)\leq(4/5)^\frac{3}{2}\right\}}F(\varphi_1^2(y))\\
&=\sum_{\left\{y\in\mathbb{Z}^{3}:\varphi(y)>(4/5)^\frac{3}{2}\right\}}F((4/5)^3)
+\sum_{\left\{y\in \mathbb{Z}^{3}:\varphi(y)\leq(4/5)^\frac{3}{2}\right\}}F(\varphi^2(y))\\
&<\sum_{\left\{y\in\mathbb{Z}^{3}:\varphi(y)>(4/5)^\frac{3}{2}\right\}}F(\varphi^2(y))
+\sum_{\left\{y\in \mathbb{Z}^{3}:\varphi(y)\leq(4/5)^\frac{3}{2}\right\}}F(\varphi^2(y))\\&=\sum_{y\in\mathbb{Z}^{3}} F(\varphi^2(y)).
\end{align*}
Given that $|\nabla\varphi_1|\leq |\nabla\varphi|$ and
$D(\varphi_1^2,\varphi_1^2)\leq D(\varphi^2,\varphi^2)$, we can
conclude that $E(\varphi)>E(\varphi_1)$. However, this contradicts
the inequality \eqref{d-6}. Thus, it proves
$0\leq\varphi\leq(\frac{4}{5})^\frac{3}{2}$, and the remaining part
of the lemma can be easily verified.
\end{proof}

The following lemma presents a crucial estimate of the minimizer
$\varphi$.

\begin{lemma}\label{lemma4.3}
Given any $x\in\mathbb{Z}^{3}$ and two radii $r,R>0$ such that
$R>r+1$, the following holds:
\begin{equation*}
\sum_{y\in (B_{r+1}(x)\cup  B_{r}(x))\cap\mathbb{Z}^3}\varphi^2(y)  \geq
\frac{1}{3(R+r)}\sum_{y\in B_{r}(x)\cap\mathbb{Z}^3}\varphi^2(y)  \sum_{y\in(B_R(x)\setminus B_{r+1}(x))\cap\mathbb{Z}^3}\varphi^2(y).
\end{equation*}
\end{lemma}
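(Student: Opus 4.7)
The plan is to compare the minimizer $\varphi$ with a test function obtained by removing the mass in the annulus $C := B_R(x)\setminus B_{r+1}(x)$ (or, equivalently, all of $B_{r+1}(x)^c$) and redepositing it far away in a very spread-out configuration; minimality $E(\psi)\ge E(\varphi)$ then translates into an upper bound on the Coulomb cross-energy through the edge boundary, from which the claim follows by the lattice distance bound $|y-z|\le R+r$ for $y\in B_r(x)$, $z\in C$.

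Concretely, first I would define $\psi_1 := \varphi\,\mathbf{1}_{B_{r+1}(x)}$, and let $\psi_2$ be a constant height plateau on a far-away ball $B_N(y_0)$ (with $|y_0|$ large), of height $\sqrt{(m-M_{r+1})/|B_N|}$, so that $\|\psi_2\|_{\ell^2}^2=m-M_{r+1}$. Using Lemma \ref{lemma2-2} one verifies that each piece of $E_0(\psi_2)$ (kinetic, nonlinear, and self-Coulomb) is $O(N^{-1})$ and that $D(\psi_1^2,\psi_2^2)=O(|y_0|^{-1})$, so $E(\psi)\to E_0(\psi_1)$ as $N,|y_0|\to\infty$. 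Since $\mathrm{supp}(\psi_1)$ and $\mathrm{supp}(\psi_2)$ are disjoint and $\|\psi\|_{\ell^2}^2=m$, minimality yields $E_0(\psi_1)\ge E(\varphi)$ in the limit.

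Second, I would split $\varphi=\psi_1+\varphi\,\mathbf{1}_{B_{r+1}(x)^c}$ and compare the three pieces of the energy edge-by-edge. The nonlinear $F(\varphi^2)$-terms split exactly since the two supports are disjoint; the kinetic cross-term yields precisely $-2\sum_{\{y,z\}\in\partial B_{r+1}(x)}\varphi(y)\varphi(z)$, while the Coulomb splits give $-2D(\varphi^2\mathbf{1}_{B_{r+1}(x)},\varphi^2\mathbf{1}_{B_{r+1}(x)^c})$. This packages the minimality inequality into
\[
2\sum_{\{y,z\}\in\partial B_{r+1}(x)}\varphi(y)\varphi(z)\;\ge\;2D(\varphi^2\mathbf{1}_{B_{r+1}(x)},\varphi^2\mathbf{1}_{B_{r+1}(x)^c})+E_0(\varphi\,\mathbf{1}_{B_{r+1}(x)^c}).
\]
The Coulomb term is then bounded below by the triangle inequality: for $y\in B_r(x)$, $z\in C$ one has $|y-z|\le r+R$, so the right-hand side dominates $\tfrac{2}{R+r}\,M_{B_r(x)}\,M_C$.

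Third, I would bound the edge-boundary sum from above by $3\,M_{B_{r+1}(x)}$. Here I would use the AM–GM bound $\varphi(y)\varphi(z)\le\tfrac12(\varphi^2(y)+\varphi^2(z))$ combined with the uniform pointwise bound $\varphi\le(4/5)^{3/2}<1$ furnished by Lemma \ref{lemma4.1}, and the $\mathbb{Z}^3$ neighbour count (each vertex has exactly $6$ neighbours, with at most $5$ going from $\partial B_{r+1}(x)$ to $\partial B_{r+2}(x)$ and at most $3$ in the reverse direction). The stray contribution involving $M_{\partial B_{r+2}(x)}\le M_C$ must be absorbed into the positive outer kinetic piece of $E_0(\varphi\mathbf{1}_{B_{r+1}(x)^c})$, which on the inner interface equals exactly $\sum_{\{y,z\}\in\partial B_{r+1},\,y\notin B_{r+1}}\varphi^2(y)$. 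Putting these bounds together in the inequality above produces $3\,M_{B_{r+1}(x)}\ge \frac{1}{R+r}\,M_{B_r(x)}\,M_C$, which is the claim after rearrangement.

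The main difficulty I anticipate is the bookkeeping of constants in Step 3: the naive AM–GM bound on $\sum_{\partial B_{r+1}}\varphi(y)\varphi(z)$ produces a term proportional to $M_C$ which must be absorbed by the positive boundary kinetic contribution hidden in $E_0(\varphi\mathbf{1}_{B_{r+1}(x)^c})$; pulling off precisely the constant $3$ in the denominator requires exploiting the strict bound $\varphi\le(4/5)^{3/2}<1$ so that $\varphi(z)<\varphi^{1/2}(z)$ on the outer shell, a feature absent in the continuous Lu–Otto argument and special to the lattice setting.
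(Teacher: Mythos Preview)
Your Step 1 is correct, but it yields a strictly weaker comparison inequality than the one you need, and this is where the argument breaks. From $E(\psi_1)\ge E(\varphi)$ and your edge-by-edge expansion you get
\[
2\sum_{\{y,z\}\ \mathrm{crossing}}\varphi(y)\varphi(z)\ \ge\ 2D\bigl(\psi_1^2,\varphi_{\mathrm{out}}^2\bigr)+E(\varphi_{\mathrm{out}}),\qquad \varphi_{\mathrm{out}}:=\varphi\,\mathbf{1}_{B_{r+1}(x)^c}.
\]
The real obstruction is not the ``stray $M_{\partial B_{r+2}}$'' you flag in Step 3, but the term $E(\varphi_{\mathrm{out}})$ itself: it contains $\sum_{B_{r+1}^c}F(\varphi^2)=\sum_{B_{r+1}^c}(\varphi^{10/3}-\varphi^{8/3})$, which by Lemma~\ref{lemma4.1} is nonpositive and can be as negative as a constant times $-M_{B_{r+1}^c}$. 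In the large-mass regime $M_{B_{r+1}^c}$ carries the bulk of the mass, so this negative contribution swamps both the boundary kinetic piece you hope to exploit and the Coulomb cross term; the inequality then gives no information. Your proposed fix via $\varphi(z)<\varphi^{1/2}(z)$ does not address this, since the problematic term lives on all of $B_{r+1}^c$, not just on the shell $\partial B_{r+2}$.

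The paper sidesteps the issue by a different choice of second test function: instead of redepositing the removed mass into a spread-out plateau (which yields only $E(\varphi)\le E(\psi_1)$), it translates $\varphi_{\mathrm{out}}$ itself far away, obtaining $E(\varphi)\le E(\psi_1)+E(\varphi_{\mathrm{out}})$ directly (this is the ``similar to the proof of \eqref{d-1}'' step). Now $E(\varphi_{\mathrm{out}})$ appears on both sides and cancels, the $F$-terms and self-Coulomb terms vanish identically, and one is left with the clean inequality
\[
2\sum_{\{y,z\}\ \mathrm{crossing}}\varphi(y)\varphi(z)\ \ge\ 2D\bigl(\psi_1^2,\varphi_{\mathrm{out}}^2\bigr).
\]
From here the lemma follows at once by AM--GM, the neighbour bound $\le 6$, and $|y-z|\le R+r$ for $y\in B_r$, $z\in B_R\setminus B_{r+1}$; no pointwise bound on $\varphi$ and no absorption argument is required.
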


\begin{proof}
Without loss of generality, we can assume that $x=0$ due to the translational invariance. Let us define the cutoff functions $\chi_1,\chi_2:\mathbb{Z}^3\to[0,1]$ as
\begin{equation*}
\begin{aligned}
\chi_1(x)=\begin{cases}
1,~&d(x,0)\leq r,\\
0,~&d(x,0)\geq r+1,
\end{cases}
\quad 
\chi_2(x)=\begin{cases}
0,~&d(x,0)\leq r,\\
1,~&d(x,0)\geq r+1.
\end{cases}
\end{aligned}
\end{equation*}	
Let $\varphi_1=\chi_1\varphi$ and $\varphi_2=\chi_2\varphi.$ Similar to the proof of \eqref{d-1}, we can obtain the following equality:
\begin{equation*}
E(\varphi)\leq E(\varphi_1)+E(\varphi_2).
\end{equation*}
This implies	
\begin{equation}\label{d-7}
	\begin{split}
		&\sum_{y\in\mathbb{Z}^{3}}
		\left(|\nabla\varphi_1(y)|^2+|\nabla\varphi_2(y)|^2-|\nabla\varphi(y)|^2\right)
		+\sum_{y\in\mathbb{Z}^{3}}\left(F({\varphi^2_1(y)})+F({\varphi^2_2(y)})-F({\varphi^2(y)})\right)\\
		&\geq
		D(\varphi^2,\varphi^2)-D(\varphi_1^2,\varphi_1^2)-D(\varphi_2^2,\varphi_2^2).
	\end{split}
\end{equation}
We shall analyze the equation \eqref{d-7} by studying the gradient term first. If $d(x,0)<r$ or $d(x,0)>r+1$. Then
\begin{equation*} 
|\nabla\varphi_1|^2+|\nabla\varphi_2|^2-|\nabla\varphi|^2=0.
\end{equation*}
If $d(x,0)=r$,  
\begin{equation*}
	\begin{split}
		|\nabla\varphi_1|^2+|\nabla\varphi_2|^2-|\nabla\varphi|^2=\sum_{y\sim x,\atop d(y,0)=r+1}\frac{1}{2}\left(\varphi^2(x)+\varphi^2(y)-(\varphi(y)-\varphi(x))^2\right)=\sum_{y\sim x,\atop d(y,0)=r+1}\varphi(x)\varphi(y).
	\end{split}
\end{equation*}
If $d(x,0)=r+1$, 
\begin{equation*}
\begin{split}
|\nabla\varphi_1|^2+|\nabla\varphi_2|^2-|\nabla\varphi|^2
=\sum_{y\sim x,\atop d(y,0)=r}\frac{1}{2}\left(\varphi^2(y)+\varphi^2(x)-(\varphi(y)-\varphi(x))^2\right)
=\sum_{y\sim x,\atop d(y,0)=r}\varphi(x)\varphi(y).
\end{split}
\end{equation*}
Therefore, from  the above three equations we get that
\begin{equation}\label{d-12}
\begin{split}
\sum_{y\in\mathbb{Z}^3}\left(|\nabla\varphi_1(y)|^2+|\nabla\varphi_2(y)|^2-|\nabla\varphi(y)|^2\right)
=&\sum_{y\in\partial B_{r}\cap\mathbb{Z}^3}\sum_{x\sim y,\atop x\in{\partial
B}_{r+1}\cap\mathbb{Z}^3\hfill}\varphi(x)\varphi(y)\\
&+\sum_{y\in\partial B_{r+1}\cap\mathbb{Z}^3}\sum_{x\sim y,\atop x\in{\partial B}_{r}\cap\mathbb{Z}^3\hfill}\varphi(x)\varphi(y)\\
\leq &~\sum_{y\in(B_{r+1}\setminus B_{r-1})\cap\mathbb{Z}^3}6\varphi^2(y),
\end{split}
\end{equation}
where the number $6$ appears due to the fact that each vertex point has 6 neighboring points at most. For the second term on the left hand side of \eqref{d-7}, one can easily check that
\begin{equation*}
\begin{split}
F(\varphi_1^2)+F(\varphi_2^2)-F(\varphi^2)=0.
\end{split}
\end{equation*}
As a consequence,
\begin{equation}\label{d-13}
\sum_{y\in\mathbb{Z}^{3}}\left(F(\varphi_1^2(y))+F(\varphi_2^2(y))-F(\varphi^2(y))\right)=0.
\end{equation}
Consider the term on the right hand side of \eqref{d-7}, after a direct computation we get that
\begin{equation}\label{d-14}
\begin{split}
D(\varphi^2,\varphi^2)-D(\varphi_1^2,\varphi_1^2)-D(\varphi_2^2,\varphi_2^2)
=~&2\sum_{x\in\mathbb{Z}^{3}}\sum_{y\neq x,\atop y\in\mathbb{Z}^{3}\hfill}\frac{\varphi_1^2(x)\varphi_2^2(y)}{|x-y|} =2\sum_{x\in\mathbb{Z}^{3}}\sum_{y\neq x,\atop y\in\mathbb{Z}^{3}\hfill}\frac{\varphi_1^2(x)\varphi_2^2(y)}{|x-y|}\\
\geq~&2\sum_{x\in B_{r}\cap\mathbb{Z}^3}\sum_{y\neq x,\atop y\in (B_R\setminus
B_{r+1})\cap\mathbb{Z}^3\hfill}\frac{\varphi_1^2(x)\varphi_2^2(y)}{|x-y|}\\
=~&2\sum_{x\in B_{r}\cap\mathbb{Z}^3}\sum_{y\neq x,\atop y\in (B_R\setminus B_{r+1})\cap\mathbb{Z}^3\hfill}\frac{\varphi^2(x)\varphi^2(y)}{|x-y|}\\
\geq~&\frac{2}{r+R}\sum_{x\in B_{r}\cap\mathbb{Z}^3}\varphi^2(x) \sum_{y\in
(B_R\setminus B_{r+1})\cap\mathbb{Z}^3}\varphi^2(y).
\end{split}
\end{equation}
By \eqref{d-7} and \eqref{d-12}-\eqref{d-14}, we complete
the proof.
\end{proof}

The following lemma provides an upper bound for the mass
$m=\sum\limits_{y\in\mathbb{Z}^{3}}\varphi^2(y)$.

\begin{lemma}\label{lemma4.4}
For any ball $B_{R_0}(x)$ with $R_0\geq4$ and
$\sum\limits_{y\in B_{R_0}(x)}\varphi^2(y)\geq C_0$, there exists a universal constant $C_0$ such that:
\begin{equation*}
m=\sum_{y\in \mathbb{Z}^{3}}\varphi^2(y)\leq
2\sum_{y\in B_{2R_0}(x)\cap\mathbb{Z}^3}\varphi^2(y).
\end{equation*}
\end{lemma}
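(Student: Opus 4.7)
The plan is a proof by contradiction. Suppose $m > 2M_{2R_0}(x)$, so that the mass outside $B_{2R_0}(x)$ satisfies
\[
m - M_{2R_0}(x) > M_{2R_0}(x) \geq M_{R_0}(x) \geq C_0.
\]
Since $\varphi\in\ell^2(\mathbb{Z}^3)$, $\sum_{y\in B_R(x)}\varphi^2(y)\to m$ as $R\to\infty$, and I may fix a finite $R^*>2R_0+1$ so that
\[
\sum_{y\in B_{R^*}(x)}\varphi^2(y) - \sum_{y\in B_{R_0+1}(x)}\varphi^2(y) \geq \tfrac12\bigl(m-M_{R_0+1}(x)\bigr) \geq \tfrac12 M_{2R_0}(x) \geq \tfrac{C_0}{2}.
\]

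Next I apply Lemma \ref{lemma4.3} with $r=R_0$ and $R=R^*$, using the sharper version from inequality \eqref{d-12} in which the left-hand side is the thin-shell mass $\sum_{y\in B_{R_0+1}(x)\setminus B_{R_0-1}(x)}\varphi^2(y)$. This yields
\[
\sum_{y\in B_{R_0+1}(x)\setminus B_{R_0-1}(x)}\varphi^2(y) \geq \frac{M_{R_0}(x)\bigl(M_{R^*}(x)-M_{R_0+1}(x)\bigr)}{3(R_0+R^*)} \geq \frac{C_0^2}{6(R_0+R^*)}.
\]
On the other hand, Lemma \ref{lemma4.1} gives $\varphi^2\leq(4/5)^3$ pointwise, so with Lemma \ref{lemma2-2} I obtain the opposite bound $\sum_{y\in B_{R_0+1}\setminus B_{R_0-1}}\varphi^2(y) \leq (4/5)^3\cdot|B_{R_0+1}\setminus B_{R_0-1}| \leq C_1 R_0^2$. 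Combining these forces $R^* \geq \tfrac{C_0^2}{6C_1 R_0^2}-R_0$.

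To finish, I will apply the same reasoning with $r$ replaced by each integer in $[R_0, R^*-1]$. For each such $r$, $M_r(x)\geq M_{R_0}(x)\geq C_0$, so Lemma \ref{lemma4.3} gives
\[
M_{r+1}(x)-M_{r-1}(x) \geq \frac{C_0\bigl(M_{R^*}(x)-M_{r+1}(x)\bigr)}{3(r+R^*)}.
\]
Summing in $r$, the left-hand side telescopes to at most $2M_{R^*}(x)\leq 2m$, while the right-hand side is bounded below by a quantity of the form $\tfrac{C_0(R^*-R_0)}{6R^*}\,\bar{\Delta}$, where $\bar{\Delta}$ is an averaged shell-deficit that, using the a priori lower bound on $R^*$ just derived, remains comparable to $M_{2R_0}(x)$. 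Together with the choice of $C_0$ as a sufficiently large universal constant (dominating $C_1$ and the implicit constants from the volume estimate in Lemma \ref{lemma2-2}), this produces a contradiction for every $R_0 \geq 4$.

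The main obstacle is the final telescoping step. In the continuous setting of \cite{J. Lu-2014}, the corresponding estimate can be integrated as a Gronwall-type inequality in the radial variable, but in the discrete case the summation must be handled carefully: the factor $\tfrac{1}{r+R^*}$ in Lemma \ref{lemma4.3} weakens the bound as $R^*$ grows, so the universal constant $C_0$ must be chosen large enough (in particular dominating the shell-volume constants from Lemma \ref{lemma2-2}) to ensure that the accumulated lower bound coming from Lemma \ref{lemma4.3} over $r\in[R_0,R^*-1]$ surpasses $2m$, which is precisely where the contradiction emerges.
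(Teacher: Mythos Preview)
Your argument has a genuine gap in the final step, and the overall strategy diverges from the paper's in a way that seems hard to repair.

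First, the step where you use the pointwise bound $\varphi^2\le(4/5)^3$ together with $|B_{R_0+1}\setminus B_{R_0-1}|\lesssim R_0^2$ to force $R^*\gtrsim C_0^2/R_0^2$ gives a lower bound on $R^*$ that \emph{deteriorates} as $R_0$ grows. Since the lemma must hold for every $R_0\ge 4$, this preliminary step gives no usable information once $R_0$ is large. (The paper never invokes the $L^\infty$ bound from Lemma~\ref{lemma4.1} here.)

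Second, and more seriously, your telescoping step is only a sketch. Summing
\[
M_{r+1}-M_{r-1}\ \ge\ \frac{C_0\bigl(M_{R^*}-M_{r+1}\bigr)}{3(r+R^*)}
\]
over $r\in[R_0,R^*-1]$ gives on the right at most $\dfrac{C_0}{6R^*}\sum_{r=R_0}^{R^*-1}\bigl(M_{R^*}-M_{r+1}\bigr)$. But $M_{R^*}-M_{r+1}\to 0$ as $r\to R^*$, and nothing prevents the mass from being concentrated near radius $R_0$, in which case $M_{R^*}-M_{r+1}$ is small for most $r$ and the sum is $O(1)$ rather than $O(R^*)$. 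Your claim that the ``averaged shell-deficit $\bar\Delta$ \dots remains comparable to $M_{2R_0}(x)$'' is precisely the point that needs proof, and it is not clear how to justify it from what you have written.

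The paper's mechanism is different: writing $S(r)=M_r(x)$, one sums the inequality $S(r+1)-S(r-1)\ge \dfrac{S(R/4)\bigl(S(R)-S(R/2)\bigr)}{6R}$ over $r\in[R/4+1,R/2-1]$, a range of length $\sim R$, so that the $1/R$ factor cancels and one obtains the \emph{scale-invariant} inequality
\[
S(R/2)-S(R/4)\ \ge\ \frac{S(R/4)}{C_1}\bigl(S(R)-S(R/2)\bigr).
\]
Choosing $C_0=2C_1$ and iterating over dyadic radii $R=4R_0,\,8R_0,\,16R_0,\dots$ makes the annular increments $S(2^kR)-S(2^{k-1}R)$ decay geometrically, whence $S(2^kR)\le 2S(2R_0)$ for all $k$ and $m\le 2S(2R_0)$. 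The dyadic iteration is the key idea you are missing; a single-scale telescoping sum with fixed outer radius $R^*$ cannot reproduce it.
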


\begin{proof}
Assuming $x=0$ for simplicity of notation, we consider the balls
$B_r$ that are concentric with $B_{R_0}$. For $r\geq 1$, we define
\begin{equation*}
S(r)=\sum_{y\in B_r(x)\cap\mathbb{Z}^3}\varphi^2(y).
\end{equation*}
From Lemma \ref{lemma4.3}, we can deduce that for all $R>r+1$:
\begin{equation*}
S(r+1)-S(r-1)\geq\frac{S(r)(S(R)-S(r+1))}{3(R+r)}.
\end{equation*}
Therefore, for $R\geq 16$ and $r\in[\frac{R}{4}+1,\frac{R}{2}-1]$ (so
that $r<R-2)$, by monotonicity of $S(r)$, one has
\begin{align*}
S(r+1)-S(r-1)
\geq\frac{S(r)(S(R)-S(r+1))}{6R}\geq\frac{S\left(\frac{R}{4}\right)\left(S(R)-S\left(\frac{R}{2}\right)\right)}{6R}.
\end{align*}
Integrating $r$ from $\frac{R}{4}+1$ to $\frac{R}{2}-1$, we get
\begin{align*}
\sum_{\frac{R}{4}+1}^{\frac{R}{2}-1}S(r+1) 
-\sum_{\frac{R}{4}+1}^{\frac{R}{2}-1}S(r-1)=~&
S\left(\frac{R}{2}\right)+S\left(\frac{R}{2}-1\right)-S\left(\frac{R}{4}+1\right)-S\left(\frac{R}{4}\right)\\
\leq~&2S\left(\frac{R}{2}\right)-2S\left(\frac{R}{4}\right).
\end{align*}
Hence we obtain
\begin{equation*}
S\left(\frac{R}{2}\right)-S\left(\frac{R}{4}\right)\geq\frac{S\left(\frac{R}{4}\right)}{C_1}\left(S(R)-S\left(\frac{R}{2}\right)\right)
\end{equation*}
for some universal constant $C_1$. The inequality holds for any
$R\geq 16$. In particular, for any $k\geq0$, we have
\begin{equation*}
\begin{split}
S\left(2^{k-1}R\right)-S\left(2^{k-2}R\right)
&\geq\frac{S\left(2^{k-2}R\right)}{C_1}\left(S\left(2^kR\right)-S\left(2^{k-1}R\right)\right)\\
&\geq\frac{S\left(\frac{R}{4}\right)}{C_1}\left(S\left(2^kR\right)-S\left(2^{k-1}R\right)\right).\\
\end{split}
\end{equation*}
Let us choose $C_0=2C_1$. Then, by taking $R=4R_0\geq16$, we can
state that for all $k\geq0$:
\begin{equation*}
\begin{split}
S\left(2^kR\right)-S\left(\frac{R}{2}\right)
&=\sum_{i=0}^k\left(S(2^iR)-S(2^{i-1}R)\right) \leq \sum_{i=0}^k\left(\frac{C_1}{S(\frac{R}{4})}\right)^{i+1}
\left(S\left(\frac{R}{2}\right)-S\left(\frac{R}{4}\right)\right)\\
&\leq S\left(\frac{R}{2}\right)-S\left(\frac{R}{4}\right).
\end{split}
\end{equation*}
In the last inequality, we utilized the fact that
$S\left(\frac{R}{4}\right)=S\left(R_0\right)\geq C_0=2C_1$.
Then we get that
\begin{equation*}
2S\left(2R_0\right)=2S\left(\frac{R}{2}\right)\geq
S\left(2^kR\right)=\sum_{y\in B_{2^kR}\cap\mathbb{Z}^3}\varphi^2(y)
\end{equation*}
for any $k$. The lemma is proved by sending $k\to\infty$.
\end{proof}

Now we are ready to give the proof of Theorem  $\ref{theorem1-2}$.

\begin{proof}[Proof of Theorem  $\ref{theorem1-2}$.] Let
\begin{equation}\label{d-15}
R_0:=\inf_R\left\{\,\exists x\in\mathbb{Z}^{3} \,\textrm{such that}
\sum_{y\in B_R(x)\cap\mathbb{Z}^3}\varphi^2(y) \geq C_0\right\},
\end{equation}
where $C_0$ is the constant in Lemma \ref{lemma4.4}. It is clear
that $R_0\in(2,\infty)$ for $m>C_0$. By Lemma \ref{lemma4.4}, we
obtain
\begin{equation*}
m\leq2\sum_{y\in B_{2R_0}(x)\cap\mathbb{Z}^3}\varphi^2(y).
\end{equation*}
For the ball $B_{2R_0}$, we can find $n$ points $x_1,...,x_l \in
B_{2R_0}$ such that
\begin{equation*}
B_{2R_0}(x)\subset\bigcup_{i=1}^{l}B_{\left([R_0]-1\right)}(x_i),
\end{equation*}
where $n$ is independent of $R_0$. It follows that
\begin{equation*}
m\leq2\sum_{y\in B_{2R_0}(x)\cap\mathbb{Z}^3}\varphi^2(y)  =2\sum_{y\in B_{2R_0}(x)\cap\mathbb{Z}^3}\varphi^2(y) 
\leq2\sum_{i=1}^{l}\sum_{y\in B_{\left([R_0]-1\right)}(x_i)\cap\mathbb{Z}^3}\varphi^2(y).
\end{equation*}
One deduces from \eqref{d-15} that $m\leq 2nC_0<\infty$.
\end{proof}

\section{The nonexistence of a minimizer for \eqref{a-2}}

In this section, we will prove Theorem \ref{theorem1-1} by contradiction. Suppose that there exists a minimizer $\Omega$, we first demonstrate that the energy satisfies the estimate: 
\begin{equation} \label{5.1} \inf\limits_{{|\Omega|=V}}E(\Omega)\lesssim V. 
\end{equation} 
On the other hand, we shall establish an estimate for the nonlocal term as follows: 
\begin{equation} \label{5.2} 
	\sum\limits_{x,y \in\Omega \hfill\atop y\ne x\hfill} {\frac{1}{|x-y|}}\gtrsim V\log V. 
\end{equation} 
By combining \eqref{5.1} and \eqref{5.2}, we arrive at a contradiction and it leads to the original conclusion. To this end we define
\begin{equation}\label{c-1}
E(V):=\inf\{E(\Omega)\mid|\Omega|=V \}.
\end{equation}
The following lemma presents an inequality of subadditivity for
$E(V)$.

\begin{lemma}\label{lemma3-1}
Assuming $V_0>1$, $V_1>1$, and $V=V_0+V_1$, it follows that
\begin{equation}\label{c-2}
E(V)\leq E(V_0)+E(V_1).
\end{equation}
\end{lemma}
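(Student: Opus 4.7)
The plan is to use the standard ``place them far apart'' subadditivity argument, adapted to the lattice. Fix $\varepsilon>0$ and pick $\Omega_0,\Omega_1\subset\mathbb{Z}^3$ with $|\Omega_i|=V_i$ and
$$E(\Omega_i)\le E(V_i)+\tfrac{\varepsilon}{2},\qquad i=0,1,$$
both of which exist by the definition \eqref{c-1} of $E(V_i)$ as an infimum. By translating, I may assume $\Omega_0$ is contained in some ball $B_{R}(0)$ with $R$ finite (both sets are finite since $V_0,V_1<\infty$), and similarly for $\Omega_1$.

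Next, I introduce a large shift $z_L=(L,0,0)\in\mathbb{Z}^3$ and set
$$\Omega_L:=\Omega_0\cup(\Omega_1+z_L).$$
For $L$ larger than $\mathrm{diam}(\Omega_0)+\mathrm{diam}(\Omega_1)+2$, the two pieces are disjoint and no vertex of one is a neighbour of the other. Hence $|\Omega_L|=V_0+V_1=V$, the boundary splits as $|\partial\Omega_L|=|\partial\Omega_0|+|\partial\Omega_1|$, and the Coulomb sum decomposes as
\begin{equation*}
\sum_{\substack{x,y\in\Omega_L\\ x\ne y}}\frac{1}{|x-y|}
=\sum_{\substack{x,y\in\Omega_0\\ x\ne y}}\frac{1}{|x-y|}
+\sum_{\substack{x,y\in\Omega_1\\ x\ne y}}\frac{1}{|x-y|}
+2\sum_{x\in\Omega_0}\sum_{y\in\Omega_1+z_L}\frac{1}{|x-y|}.
\end{equation*}

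The last (cross) term is the only thing to control. Since the distance from any $x\in\Omega_0$ to any $y\in\Omega_1+z_L$ is at least $L-\mathrm{diam}(\Omega_0)-\mathrm{diam}(\Omega_1)$, I obtain the crude bound
$$2\sum_{x\in\Omega_0}\sum_{y\in\Omega_1+z_L}\frac{1}{|x-y|}\le \frac{2V_0V_1}{L-\mathrm{diam}(\Omega_0)-\mathrm{diam}(\Omega_1)},$$
which tends to $0$ as $L\to\infty$. Combining the three displays gives
$$E(V)\le E(\Omega_L)=E(\Omega_0)+E(\Omega_1)+o(1)\quad\text{as }L\to\infty,$$
so letting $L\to\infty$ and then $\varepsilon\to 0$ yields $E(V)\le E(V_0)+E(V_1)$, which is \eqref{c-2}.

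There is no serious obstacle here; the only mild point to verify carefully is that, once the two pieces are separated by more than one lattice step, the lattice boundary functional is additive (a boundary vertex of $\Omega_0$ still has all its ``outside'' neighbours outside $\Omega_L$, and no vertex of $\Omega_0$ can be a boundary vertex created by the presence of $\Omega_1+z_L$, and vice versa). Once that is noted, the rest is a direct computation together with the decay of the cross-Coulomb term.
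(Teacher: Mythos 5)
Your argument is correct and is essentially the same as the paper's: separate near-optimal competitors $\Omega_0$ and $\Omega_1$ by a large translation, note additivity of volume and of the discrete perimeter once the pieces are non-adjacent, and kill the cross-Coulomb term (bounded by $2V_0V_1$ over the separation distance) by sending the shift to infinity before letting $\varepsilon\to 0$. No substantive differences to report.
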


\begin{proof}
By the definition of infimum, for any given $\epsilon\ge0$, there are
bounded sets ${\Omega}_i, i=0,1$, such that 
$$E({\Omega}_i)\leq E(V_i)+\epsilon.\footnote{In the discrete case, even if the set ${E(\Omega),~|\Omega|=V}$ does not have concentrated values, we can still find sets $\Omega_i$ with aforementioned properties. For instance, if $E(V_i)$ is not a concentrated value, we can choose the minimizer with $\epsilon$ set to be $0$. While if $E(V_i)$ is a concentrated value, we can proceed as in the continuous case.}$$ 
Additionally, there exists a large ball $B_R$ such
that ${\Omega}_i\subset B_R(0)$. Let ${\widetilde{\Omega}}$ be defined
as ${\Omega}_0+(d+{\Omega}_1)$, where $d$ is a shift vector with
$|d|>2R$. It should be noted that when $x\in {\Omega}_0$ and $y \in
d+{\Omega}_1$, we have $|x-y|>d-2R>0$. It is evident that $|\partial
\widetilde{\Omega}|=|\partial {\Omega}_0|+|\partial {\Omega}_1|$ and
$|\widetilde{\Omega}|=| {\Omega}_0|+| {\Omega}_1|$ due to the definition
of $\widetilde{\Omega}$. Thus, based on the definition of $E(V)$, we
have:
\begin{equation*}
\begin{split}
E(V)&\leq E(\widetilde{\Omega})
=|\partial {\Omega}_0|+|\partial {\Omega}_1|+\sum_{x,y \in {\Omega}_0 \hfill\atop ~\ y\ne x\hfill}{\frac{1}{|x-y|}} +\sum_{x,y \in d+{\Omega}_1 \hfill\atop ~\ ~\ y\ne
x\hfill}{\frac{1}{|x-y|}}
+2\sum_{x\in {\Omega}_0,~y \in d+{\Omega}_1 \hfill\atop
~\ \quad  y\ne x\hfill}{\frac{1}{|x-y|}}\\
&\leq E({\Omega}_0)+E({\Omega}_1)+ \frac{2V_0V_1}{|d|-2R}\\
&\leq E(V_0)+E(V_1)+2\epsilon +\frac{2V_0V_1}{|d|-2R}.
\end{split}
\end{equation*}
As $|d|$ tends to infinity and $\epsilon$ tends to zero, the
inequality is established.
\end{proof}

The subsequent lemma establishes an estimate for $E(V)$ and elucidates the properties of the minimizer $\Omega$.

\begin{lemma}\label{lemma3-2}
Let $V\gg1$, then
\begin{enumerate}
\item [$(i)$] We have $E(V)\lesssim V$.
\item [$(ii)$] If $\Omega$ is a minimizer of \eqref{a-2}, then $\Omega$ must be a connected set.
\end{enumerate}
\end{lemma}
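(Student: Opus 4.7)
For part $(i)$, the plan is to exhibit an explicit admissible set whose energy is of order $V$. The naive candidate --- a discrete ball $B_R$ with $R\sim V^{1/3}$ --- has perimeter of order $V^{2/3}$ but a nonlocal term of order $V^{5/3}$, far exceeding the target. Instead, I will spread $V$ isolated vertices over a sparse sublattice, trading perimeter for Coulomb energy. Fix $N:=\lceil V^{1/3}\rceil$ and an integer spacing $L\geq 2$ (to be chosen), and take
$$\Omega := \{(Lk_1,Lk_2,Lk_3):1\leq k_j\leq N\},$$
removing at most $O(V^{2/3})$ points if necessary so that $|\Omega|=V$. Because $L\geq 2$, no two points of $\Omega$ are adjacent in $\mathbb{Z}^3$, so every vertex of $\Omega$ belongs to $\partial\Omega$ and $|\partial\Omega|=V$. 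For the nonlocal term, Lemma \ref{lemma2-2} gives, for each fixed $k\in[1,N]^3$,
$$\sum_{\substack{k'\in[1,N]^3\\ k'\neq k}}\frac{1}{|k-k'|}\;\lesssim\;\sum_{\ell=1}^{3N}\frac{|\partial B_\ell|}{\ell}\;\lesssim\;\sum_{\ell=1}^{3N}\frac{4\ell^2+2}{\ell}\;\lesssim\;N^2,$$
so that
$$\sum_{\substack{x,y\in\Omega\\ x\neq y}}\frac{1}{|x-y|}\;\lesssim\;\frac{1}{L}\cdot N^3\cdot N^2\;\lesssim\;\frac{V^{5/3}}{L}.$$
Choosing $L\sim V^{2/3}$ (permitted because $V\gg 1$) yields $E(\Omega)\lesssim V+V\lesssim V$, which proves $(i)$.

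For part $(ii)$, I argue by contradiction with a far-field translation, mirroring Lemma \ref{lemma3-1}. Assume the minimizer $\Omega$ is disconnected and write $\Omega=\Omega_1\sqcup\Omega_2$ with $\Omega_1,\Omega_2$ nonempty and no edge of $\mathbb{Z}^3$ joining them. Since $\partial\Omega$ and the within/cross nonlocal contributions split cleanly under this decomposition,
$$E(\Omega)=|\partial\Omega_1|+|\partial\Omega_2|+\sum_{\substack{x,y\in\Omega_1\\ x\neq y}}\frac{1}{|x-y|}+\sum_{\substack{x,y\in\Omega_2\\ x\neq y}}\frac{1}{|x-y|}+2\!\sum_{\substack{x\in\Omega_1\\ y\in\Omega_2}}\frac{1}{|x-y|}.$$
Pick $d\in\mathbb{Z}^3$ with $|d|\gg \mathrm{diam}(\Omega)$ and set $\widetilde\Omega:=\Omega_1\cup(\Omega_2+d)$. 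Then $|\widetilde\Omega|=V$, the boundary and within-component nonlocal contributions are preserved, but the cross term satisfies
$$2\!\sum_{\substack{x\in\Omega_1\\ y\in\Omega_2+d}}\frac{1}{|x-y|}\;\leq\;\frac{2|\Omega_1||\Omega_2|}{|d|-\mathrm{diam}(\Omega)}\;\xrightarrow[|d|\to\infty]{}\;0.$$
Since the original cross term $2\sum_{x\in\Omega_1,y\in\Omega_2}|x-y|^{-1}$ is strictly positive, we obtain $E(\widetilde\Omega)<E(\Omega)$ for $|d|$ sufficiently large, contradicting the minimality of $\Omega$.

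The main obstacle lies in $(i)$: the linear-in-$V$ upper bound cannot be realized by any bounded-diameter configuration, so the continuous intuition of a ball is misleading. The essential point is that absorbing the maximal perimeter cost $|\partial\Omega|=V$ through isolated singletons allows one to push every Coulomb pair out to distance $\gtrsim V^{2/3}$, and this separation exactly cancels the $V^{5/3}$ growth of the nonlocal double sum. Part $(ii)$ is then the standard translation-to-infinity argument, using only the strict positivity of the cross Coulomb term.
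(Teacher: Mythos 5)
Your proof is correct. For part $(i)$ you take a genuinely different route from the paper: the paper first observes that the energy of a single vertex is bounded, so $E(V)\lesssim 1$ for $1\le V<2$, and then iterates the subadditivity inequality $E(V)\le E(V_0)+E(V_1)$ of Lemma \ref{lemma3-1} to get $E(V)\lesssim V$ by induction; this amounts to sending $V$ singletons off to mutual infinity and obtains the bound only through a limiting/infimizing sequence. Your explicit sparse sublattice with spacing $L\sim V^{2/3}$ realizes the same bound with a single concrete competitor, and it makes transparent where the balance lies: the perimeter saturates at its trivial maximum $|\partial\Omega|\le|\Omega|=V$, while the enforced separation exactly cancels the $V^{5/3}$ growth of the Coulomb double sum. (The bookkeeping is fine: deleting the $O(V^{2/3})$ surplus points only decreases both terms, and the discrepancy between the graph metric used in Lemma \ref{lemma2-2} and the Euclidean distance in the Coulomb kernel is absorbed into the implicit constant.) For part $(ii)$ both arguments are the standard translation argument; yours is in fact slightly more careful than the paper's, which fixes a single shift with $|d|>1$ and simply asserts that the cross Coulomb term strictly decreases --- for a fixed small shift this need not hold --- whereas your limit $|d|\to\infty$ combined with the strict positivity of the original cross term closes the contradiction cleanly.
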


\begin{proof}
$(i).$ By Lemma \ref{lemma2-2}, 
$$|\partial\Omega|\sim V^\frac{2}{3}~\mbox{for a ball}~\Omega~\mbox{of volume}~V.$$
Using the fact that $|x-y|\geq1$ we have
$$\sum\limits_{x,y \in {\Omega} \hfill\atop ~ y\ne
	x\hfill}{\frac{1}{|x-y|}}\lesssim V^2.$$
Therefore
$$E(\Omega)\lesssim V^2+V^{\frac23},$$
which implies  
\begin{equation}
\label{5-sub}
E(V)\lesssim 1 \quad \mbox{for}\quad 1\leq |V|<2.
\end{equation}
Through the process of induction and utilizing the subadditivity
inequality \eqref{c-2}, we are able to derive that
\begin{equation*}
E(V)\lesssim N\quad \textrm{for}\quad N\leq V<(N+1).
\end{equation*}

\noindent $(ii).$ When $\Omega$ is not a connected domain, we can divide it
into two separate regions $\Omega_1$ and $\Omega_2$ in such a way
that no point $x\in\Omega_1$ is connected to any point
$y\in\Omega_2$ (i.e., $x\nsim y$). Let us define 
$$\widehat{\Omega}={\Omega}_1+(d+{\Omega}_2),$$ 
where $d$ is a shift vector with
$|d|>1$. It is clear that 
$$|\partial \widehat{\Omega}|=|\partial
{\Omega}_1|+|\partial {\Omega}_2|,\quad
|\widehat{\Omega}|=|
{\Omega}_1|+| {\Omega}_2|=V,$$ 
and
$$\sum\limits_{x,y \in {\widehat\Omega} \hfill\atop ~ y\ne
	x\hfill}\frac{1}{|x-y|}<\sum\limits_{x,y \in {\Omega} \hfill\atop ~ y\ne
	x\hfill}\frac{1}{|x-y|}.$$
Therefore, we have $E(\widehat{\Omega})< E(\Omega)$, which contradicts the definition of $\Omega$.
\end{proof}

The following lemma provides lower bound for the nonlocal expression
$\sum\limits_{x,y \in {\Omega} \hfill\atop ~ y\ne
	x\hfill}\frac{1}{|x-y|}$.

\begin{lemma}
\label{le5.3}
Suppose that $V\gg1$ and $\Omega$ minimizes \eqref{a-2}, then the following
estimate holds for the nonlocal expression:
\begin{equation}\label{c-3}
\sum\limits_{x,y \in {\Omega} \hfill\atop ~ y\ne
	x\hfill}\frac{1}{|x-y|}\gtrsim V\log V.
\end{equation}
\end{lemma}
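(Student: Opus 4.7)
The plan is to establish a pointwise-in-$x$ lower bound of order $\log V$ on $\sum_{y\in\Omega,y\ne x}\frac{1}{|x-y|}$, and then sum over the $V$ vertices of $\Omega$. The crucial input is that by Lemma~\ref{lemma3-2}$(ii)$ the minimizer $\Omega$ is connected, so breadth-first search from any vertex reveals a controlled amount of mass at every scale.

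First I would replace the Euclidean distance by the graph distance $d(x,y)=\sum_{j=1}^3|x_j-y_j|$. Since $|x-y|\le d(x,y)$ on $\mathbb{Z}^3$, we have $\frac{1}{|x-y|}\ge\frac{1}{d(x,y)}$, so it suffices to bound $\sum_{y\in\Omega,y\ne x}\frac{1}{d(x,y)}$ from below. Setting $a_r(x):=|B_r(x)\cap\Omega|$ with $a_0(x)=1$, a layer cake decomposition gives
\begin{equation*}
\sum_{y\in\Omega,\,y\ne x}\frac{1}{d(x,y)}=\sum_{r=1}^{\infty}\frac{a_r(x)-a_{r-1}(x)}{r}.
\end{equation*}
Since $a_r(x)=V$ for $r\ge\mathrm{diam}(\Omega)$, Abel summation converts this to
\begin{equation*}
\sum_{y\in\Omega,\,y\ne x}\frac{1}{d(x,y)}=-1+\sum_{r=1}^{\infty}\frac{a_r(x)}{r(r+1)}.
\end{equation*}

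Second I would prove the key growth estimate $a_r(x)\ge\min(r+1,V)$ using connectedness: performing BFS inside $\Omega$ starting at $x$, each step adds at least one new vertex until all of $\Omega$ is exhausted, so the intrinsic ball of radius $r$ in $\Omega$ has size $\ge\min(r+1,V)$. Because the intrinsic graph distance on $\Omega$ dominates the ambient distance, every such vertex lies in $B_r(x)\cap\Omega$, yielding the claim.

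Third I would plug this estimate into the Abel identity and split the sum at $r=V-1$: for $r\le V-1$ the summand is $\ge\frac{1}{r}$, giving $\sum_{r=1}^{V-1}\frac{1}{r}=\log V+O(1)$, while the tail contributes $V\sum_{r\ge V}\frac{1}{r(r+1)}=1$. Hence for every $x\in\Omega$,
\begin{equation*}
\sum_{y\in\Omega,\,y\ne x}\frac{1}{|x-y|}\ge\log V-C,
\end{equation*}
and summing over the $V$ vertices of $\Omega$ produces \eqref{c-3}. The only real obstacle is a clean statement of the BFS bound $a_r(x)\ge\min(r+1,V)$, keeping straight the distinction between the intrinsic graph distance on $\Omega$ (used for BFS) and the ambient distance on $\mathbb{Z}^3$ (used to define $B_r(x)$); the rest reduces to routine Abel summation and the harmonic series.
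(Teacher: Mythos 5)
Your proof is correct and follows essentially the same route as the paper: connectedness of the minimizer forces $|B_r(x)\cap\Omega|\gtrsim r$, and an Abel summation over distance shells then produces the logarithm. The only difference is cosmetic --- you work pointwise in $x$ with the slightly sharper growth bound $a_r(x)\ge\min(r+1,V)$, which lets you avoid the paper's restriction to radii $r<\tfrac12\mathrm{diam}(\Omega)$ and its appeal to $\mathrm{diam}(\Omega)\gtrsim V^{1/3}$.
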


\begin{proof}
Our first assertion is that
\begin{equation}\label{c-4}
|\Omega\cap B_R(x)|\gtrsim R\,\,\,\,\textrm{for all vertics}\,\,
x\in\Omega \,\, \textrm{and radii}\,
1<R<\frac{1}{2}\textrm{diam}(\Omega).
\end{equation}
Let $x$ be a fixed point in $\Omega$ and $1 < R <
\frac{1}{2}\textrm{diam}(\Omega)$ be a chosen radius. By the
definition of the diameter of $\Omega$, we see that $\Omega$ is not entirely
contained within $B_R(x)$. Using the fact that $\Omega$ is connected, we are able to find
a path $\{x_i\}_{i=1}^{R}\in \Omega\cap B_R(x)$  such that $x=x_1$.
This implies that $ |\Omega\cap B_R(x)|\geq R$ and it proves the
assertion \eqref{c-4}. As depicted in Figure \eqref{fig:4}.
\begin{figure}[htbp]
  \centering
  \includegraphics[width=0.3\textwidth]{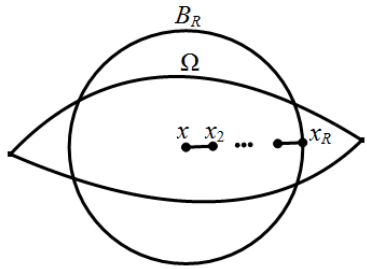}
  \caption{The sketch of $B_R$ and $\Omega$.}\label{fig:4}
\end{figure}

\noindent  Now we claim
\begin{equation*}
\sum\limits_{x,y \in {\Omega} \hfill\atop ~ y\ne
	x\hfill}\frac{1}{|x-y|}\gtrsim V\log\textrm{diam}(\Omega).
\end{equation*}
It is worth noting that, for all
$1<t<\frac{1}{2}\textrm{diam}(\Omega)$,
\begin{equation*}
\begin{split}
A(t):=\big|\{(x,y)\in\Omega\times\Omega ||x-y|<t\}\big|=\sum_{x\in\Omega}\big|\{y\in\Omega ||y-x|<t\}\big|=\sum_{x\in\Omega}\big|\Omega\cap B_{t}(x) \big|\gtrsim t |\Omega|.
\end{split}
\end{equation*}
Hence we get from Lemma
\ref{lemma2-2}~($\textrm{diam}\,\Omega\gtrsim V^\frac{1}{3}$) that
\begin{align*}
\sum\limits_{x,y \in {\Omega} \hfill\atop ~ y\ne
	x\hfill}\frac{1}{|x-y|}
&\geq \sum_{t=1}^{\infty}\frac{1}{t}\big|\{(x,y)\in\Omega\times\Omega | t-1\leq|x-y|<t\}\big|\\
&\geq  \sum_{t=1}^{\infty}\frac{1}{t}(A(t)-A(t-1)) \geq  \sum_{t=1}^{\frac{1}{2}\textrm{diam}(\Omega)}\frac{1}{t(t+1)}A(t)\\
&\geq  \sum_{t=1}^{\frac{1}{2}\textrm{diam}(\Omega)}\frac{1}{2t}|\Omega| \gtrsim |\Omega|\log|\textrm{diam}(\Omega)|\gtrsim V\log V.
\end{align*}
Therefore, we obtain the estimate \eqref{c-3}.
\end{proof}

\begin{proof}[Proof of Theorem \ref{theorem1-1}]
 Theorem \ref{theorem1-1} follows from Lemma \ref{lemma3-2} and Lemma \ref{le5.3}.
\end{proof}

\vspace{1.5cm}

\end{document}